\theoremstyle{definition}
\newtheorem{defi}{Definition}
\newtheorem{ex}{Example}
\theoremstyle{plain}
\newtheorem{thm}[defi]{Theorem}
\newtheorem{lemma}[defi]{Lemma}
\newtheorem{cor}[defi]{Corollary}
\newtheorem{prop}[defi]{Proposition}
\newtheorem{fact}{Fact}
\numberwithin{equation}{section}
\newcommand{\R}{\ensuremath{\mathbbm{R}}}     
\newcommand{\N}{\ensuremath{\mathbbm{N}}}     
\newcommand{\Z}{\ensuremath{\mathbbm{Z}}}     
\newcommand{\Q}{\ensuremath{\mathbbm{Q}}}     
\renewcommand{\H}{\ensuremath{\mathbbm{H}}}   
\newcommand{\J}{\ensuremath{\mathbbm{J}}}     
\renewcommand{\L}{\ensuremath{\mathbbm{L}}}   
\def\lt{\left}
\def\rt{\right}
\newcommand{\Rd}{\ensuremath{\R^d}}                       
\renewcommand{\Re}[1]{\ensuremath{\mathrm{Re}\lt(#1\rt)}} 
\renewcommand{\Im}[1]{\ensuremath{\mathrm{Im}\lt(#1\rt)}} 
\newcommand{\set}[1]{\left\{#1\right\}}                   
\newcommand{\abs}[1]{\left|#1\right|}                     
\newcommand{\abss}[1]{{\left|#1\right|}^2}                
\newcommand{\inn}[2]{\lt\langle #1,#2\rt\rangle}          
\newcommand{\nrq}{{|q|}^2}                                
\newcommand{\nrp}{{|p|}^2}                                
\DeclareMathOperator{\den}{den}                           
\DeclareMathOperator{\lcm}{lcm}                           
\DeclareMathOperator{\OC}{OC} 
\DeclareMathOperator{\SOC}{SOC}
\DeclareMathOperator{\SO}{SO}
\DeclareMathOperator{\OG}{O} 
\newcommand{\e}{\ensuremath{\mathbf{e}}}                  
\newcommand{\ii}{\ensuremath{\mathbf{i}}}                 
\renewcommand{\j}{\ensuremath{\mathbf{j}}}                
\renewcommand{\k}{\ensuremath{\mathbf{k}}}                
\def\imod#1{\allowbreak\mkern10mu({\operator@font mod}\,\,#1)} 
\def\@setcopyright{}                                           
\def\serieslogo@{}
\newcommand{\G}{\ensuremath{\Lambda}}       
\renewcommand{\S}{\ensuremath{\Sigma}}      
\begin{document}
	\author[M.J.C.~Loquias]{Manuel Joseph C.~Loquias}
	\address[M.J.C.~Loquias]{Institute of Mathematics, University of the Philippines Diliman, 1101 Quezon City, Philippines, and Chair of Mathematics and Statistics, University of
	Leoben, Franz-Josef-Strasse 18, A-8700 Leoben, Austria}
	\email{mjcloquias@math.upd.edu.ph}
	
	\author[P.~Zeiner]{Peter Zeiner}
	\address[P.~Zeiner]{Fakult\"at f\"ur Mathematik, Universit\"at Bielefeld, Postfach 100131, 33501 Germany}
	\email{pzeiner@math.uni-bielefeld.de}

	\title[Coincidence indices of sublattices and Coincidences of Colorings]{Coincidence indices of sublattices and\\ coincidences of colorings}

	\begin{abstract} 
		Even though a lattice and its sublattices have the same group of coincidence isometries, the coincidence index of a coincidence isometry with respect
		to a lattice $\G_1$ and to a sublattice $\G_2$ may differ.  Here, we examine the coloring of $\G_1$ induced by $\G_2$ to identify how the 
		coincidence indices with respect to $\G_1$ and to $\G_2$ are related.  This leads to a generalization of the notion of color symmetries of lattices to what we call color
		coincidences of lattices.  Examples involving the cubic and hypercubic lattices are given to illustrate these ideas.
	\end{abstract}

	\subjclass[2010]{Primary 52C07; Secondary 11H06, 82D25, 52C23}

	\keywords{coincidence site lattice, coincidence index, color coincidences, color symmetry}

	\date{\today}

	\maketitle
	
	\section{Introduction}   
		Coincidence site lattices (CSLs) are an important tool to describe grain boundaries in crystals~\cite{F11,R66,B70} and quasicrystals~\cite{W93,WL95,OWL98,WL01}.
		In particular one is interested in the so-called coincidence index, 
		which describes how much larger the unit cell of the CSL is compared to the unit cell of the parent lattice.
		It is a well-known fact that a lattice and all its sublattices share the same group of coincidence isometries, 
		more generally - two commensurate lattices have the same coincidence isometries. 
		However, the coincidence indices of a coincidence isometry with respect to the lattice and its sublattices usually differ. 
		Nevertheless, there is a relationship between the coincidence indices for a parent lattice and its sublattice. 
		The aim of this paper is to give an explicit expression of this relationship by means of colorings of lattices.

		Grimmer, Bollmann, and Warrington proved that the coincidence indices for a given coincidence isometry
		are the same for all three cubic lattices by using a shelling argument~\cite{GBW74}.
		The main idea behind their proof can be rephrased in the terminology of colorings. 
		Consider a body-centered cubic lattice and its maximal primitive cubic sublattice of index $2$.
		Suppose we assign the color black to the lattice points of the primitive cubic lattice 
		and the color white to the other lattice points. 
		Then a rotation about any lattice point can map black points only onto black points and never onto white points, 
		and vice versa - this follows from the fact that black and white points lie on different shells.

		The aim of this paper is to generalize this idea. 
		First, we color the lattice points of a sublattice using the same color, and assign suitable colors to the remaining lattice points. 
		By analyzing how these colors are interchanged by the coincidence isometries, 
		we are able to get an explicit formula for the coincidence index of the sublattice 
		in terms of the coincidence index of its parent lattice and some properties of the coloring of the lattice.

		This establishes a link between CSLs and colorings of a lattice.
		Actually, connections between colorings and CSLs have already been observed earlier in~\cite{S80,PBR96,B97b,L08,LF08}.
		This motivates us to analyze the coincidences of lattice colorings even further.
		Indeed, there is a long tradition on analyzing colorings of various structures,
		starting from the context of magnetic structures~\cite{SB64, Litvin2013} that led to
		an extensive analysis of various symmetrically colored symmetrical structures, 
		such as colored lattices~\cite{H78a,S80,S84,DLPF07} and colored tilings~\cite{C86,S88}. 
		In fact, studies have been done not only on colored periodic tilings, but also on colored quasiperiodic tilings~\cite{MP94,L97,B97b,BDLPF13}.
    
    Recall that a CSL is the intersection of a lattice with a rotated copy of itself,
		while its coincidence index is the ratio of the volumes of a (primitive) unit cell of the CSL and a unit cell of the parent lattice. 
		Thus, the coincidence index is a measure of how the two lattices (the lattice and its rotated copy) fit together. 
		Moreover, it also describes how well two crystal grains fit together. 
		Here, it is not so important whether the actual atomic positions at the grain boundary do coincide, 
		as the atoms will rearrange anyhow to minimize energy. 
		What is important is that the grain boundary has a small (two-dimensional) unit cell. 
		The latter is related to the coincidence index and can be calculated by considering the corresponding CSL.
		By a rule of thumb, grain boundaries with a small coincidence index have low energy and are thus preferred. 
		Of course, this rule of thumb is based on geometric considerations only and cannot replace an actual calculation of the energy of the grain boundary.
		Nevertheless, it serves as an important tool in the theory of interfaces as it gives a good hint on which grain boundaries are preferred (\emph{cf.}~\cite{H97,SB2006}).
		
		Hence, we consider coincidences of lattices only, although coincidences of actual crystals could be considered as well and have been considered~\cite{LZ14}. 
		This means that we restrict our attention to coincidence isometries that fix the origin, 
		but see~\cite{LZ14} for a treatment of general affine coincidence isometries.
                
		Note that most results of the present paper have been announced earlier in~\cite{LZ11} (but without proofs).  
		That earlier article,  where examples using the square lattice and the Ammann-Beenker tiling are depicted, 
		was intended more for experimentalists and crystallographers, while this contribution is more for mathematical crystallographers.
		Here, we illustrate the general results through examples involving the cubic lattices in dimension $d=3$ and involving the hypercubic lattices in $d=4$.
		The cubic lattices are certainly important lattices, 
		and our example will also clarify the connection of our approach with colorings to the shelling argument of Grimmer \emph{et al.}~\cite{GBW74}. 
		The example involving the hypercubic lattices in $d=4$ serves to show that our approach is not only
		confined to periodic crystals but is also applicable for quasicrystals.
		Recall that the lattice $\Z^4$ can be used to generate the Ammann-Beenker tiling via the cut-and-projection scheme (\emph{cf.}~\cite{TAO1}).
		Even though both examples can be solved by means of orthogonal matrices, 
		the most efficient way to solve them is to make use of the Hurwitz quaternions. 
		Quaternions are so powerful and natural in this context that they have been used for CSLs since at least 1974 by Grimmer~\cite{G74}
		(with Ranganathan's paper~\cite{R66} containing them already in some disguised form), and have become a standard tool~\cite{SB2006} in this area.
		The details of these calculations are contained in the appendix, to make the main results more accessible for those who are not so familiar with quaternions. 
	
	\section{Preliminaries}
		Let us briefly recall some basic notions first, for more details see~\cite{B97}.  A \emph{lattice} $\G$ (of \emph{rank} and \emph{dimension} $d$) is a discrete subset of 
		$\Rd$ that is the \Z-span of $d$ linearly independent vectors $v_1,\ldots,v_d$ over $\R$.  The set $\set{v_1,\ldots,v_d}$ is referred to as a \emph{basis} of $\G$. As a group, 
		$\G$ is isomorphic to the free abelian group of rank $d$.  A \emph{sublattice} $\G'$ of $\G$ is a subset of $\G$	that forms a subgroup of finite index in $\G$.  This means that 
		$\G'$ is itself a lattice and is of the same rank and dimension as $\G$.  Here, the group	index $[\G:\G']<\infty$ can be interpreted geometrically as the ratio of the 
		volume of a fundamental domain of $\G'$ by the volume of a fundamental domain of $\G$.	
		
		The \emph{dual} of a lattice $\G$ is the lattice 
		\[\G^*\vcentcolon=\set{x\in\Rd:\inn{x}{y}\in\Z \text{ for all }y\in\G},\] 
		where $\inn{\cdot}{\cdot}$ denotes 
		the standard scalar product in $\Rd$.  Two lattices $\G_1$ and $\G_2$ are said to be \emph{commensurate} if their intersection is a sublattice of both 
		lattices.  The \emph{sum} of two commensurate lattices $\G_1$ and $\G_2$, 
		\[\G_1+\G_2\vcentcolon=\set{\ell_1+\ell_2:\ell_1\in\G_1,\ell_2\in\G_2},\]
		also forms a lattice.
		
		An orthogonal transformation $R\in\operatorname{O}(d)\vcentcolon=\operatorname{O}(d,\R)$ is said to be a \emph{coincidence isometry} of the lattice $\G$ if $\G\cap R\G$ 
		is a sublattice of both $\G$ and $R\G$.  The sublattice $\G(R)\vcentcolon=\G\cap R\G$ is called the \emph{coincidence site lattice} (CSL) of $\G$ generated by $R$, and 
		\[\S_{\G}(R)\vcentcolon=[\G:\G(R)]=[R\G:\G(R)]\] 
		is called the \emph{coincidence index of $R$ with respect to $\G$}.  The set of coincidence isometries of $\G$
		forms a group, denoted by $\OC(\G)$~\cite[Theorem 2.1]{B97}.  
    Note that $\OC(\G)$ is a countable subgroup of $\OG(d)$, 
		and it contains the \emph{point group} $P(\G)$ of $\G$ as a subgroup. 
		In particular, we can characterize $P(\G)$ by means of the coincidence index as follows:
		\[
			P(\G)=\set{R\in \operatorname{O}(d):R\G=\G}
			=\set{R\in \OC(\G)\mid \S_{\G}(R)=1}.
		\]
    The subgroup of $\OC(\G)$ formed by all coincidence rotations of $\G$ is written as $\SOC(\G):=\OC(\G)\cap\SO(d)$.	

		Coincidences of a lattice and its sublattices are closely related.  In particular, we have the following result~\cite{B97}.
		\begin{fact}\label{ocsub}
			If $\G_2$ is a sublattice of $\G_1$, then $\OC(\G_1)=\OC(\G_2)$.
		\end{fact}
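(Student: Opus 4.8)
The plan is to prove both inclusions $\OC(\G_1)\subseteq\OC(\G_2)$ and $\OC(\G_2)\subseteq\OC(\G_1)$ by a symmetric argument, so it suffices to show one direction together with the observation that $\G_2$ being a sublattice of $\G_1$ and $\G_1$ being ``commensurate from above'' play interchangeable roles. The key structural fact to exploit is that if $\G_2\subseteq\G_1$ is a sublattice, then $\G_1$ and $\G_2$ are commensurate, and moreover $R\G_2\subseteq R\G_1$ is a sublattice with the same index $[\G_1:\G_2]=[R\G_1:R\G_2]$ for every $R\in\OG(d)$, since $R$ is volume-preserving. So the whole problem reduces to understanding how the intersection $\G_1\cap R\G_1$ compares with $\G_2\cap R\G_2$.

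First I would take $R\in\OC(\G_1)$, so that $\G_1(R)=\G_1\cap R\G_1$ is a sublattice of finite index in both $\G_1$ and $R\G_1$. I then want to show $\G_2\cap R\G_2$ is a sublattice of finite index in $\G_2$ (and in $R\G_2$). For finite index in $\G_2$: observe $\G_2\cap R\G_2 \supseteq \G_2\cap\G_1(R)\cap R\G_2$, and one can sandwich things using $[\G_2:\G_2\cap R\G_2]\le[\G_1:\G_2\cap R\G_2]$ once we know the latter is finite; and $[\G_1:\G_2\cap R\G_2]\le[\G_1:\G_2]\cdot[\G_2: \cdots]$ — the cleaner route is to note $\G_2\cap R\G_2$ contains $\G_2\cap R\G_2\cap (\text{something of finite index})$. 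Concretely: $\G_1(R)\cap\G_2$ has finite index in $\G_2$ (intersection of two finite-index things inside $\G_1$), and similarly $R\G_1(R)\cap R\G_2=R(\G_1(R)\cap\G_2)$ has finite index in $R\G_2$; but $\G_1(R)\subseteq\G_1\cap R\G_1$ so $\G_1(R)\cap\G_2\subseteq \G_2$ and $\G_1(R)\cap\G_2\subseteq R\G_1\cap\G_2$. The slick statement I would actually use: for any lattice $\G$ and sublattice $\G'$, and any coincidence isometry $R$, one has $\G'(R)=\G'\cap R\G' \supseteq \G(R)\cap\G'\cap R\G'$, and $\G(R)\cap\G'$ has finite index in $\G'$ while $R(\G(R)\cap\G')$ — hmm. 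Let me instead use the sum: $\G_1(R)+\G_2$ is a lattice (the two summands are commensurate since both contain $\G_1(R)\cap\G_2$... actually $\G_2\supseteq$ is false). The honest clean argument: $\G_1(R)$ and $\G_2$ are both sublattices of $\G_1$, hence commensurate, hence $\G_1(R)\cap\G_2$ is a sublattice of both; in particular $[\G_2:\G_1(R)\cap\G_2]<\infty$. Now $\G_1(R)\cap\G_2\subseteq\G_1\cap R\G_1\cap\G_2$, and since $\G_2\subseteq\G_1$ this equals $\G_2\cap R\G_1\supseteq\G_2\cap R\G_2$; that inclusion goes the wrong way, so instead note $\G_1(R)\cap R\G_2 = R(R^{-1}\G_1(R)\cap\G_2)$ and $R^{-1}\G_1(R)=R^{-1}\G_1\cap\G_1$ is a sublattice of $\G_1$, so $R^{-1}\G_1(R)\cap\G_2$ has finite index in $\G_2$, whence $\G_1(R)\cap R\G_2$ has finite index in $R\G_2$ and, being contained in $R\G_1\cap R\G_2=R\G_2$... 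I see the real point:

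The main obstacle — and the step deserving the most care — is getting a single lattice contained in $\G_2\cap R\G_2$ of finite index. I would resolve it by considering $\Delta := \G_1(R)\cap\G_2\cap R\G_2$. Since $\G_1(R),\G_2$ are commensurate in $\G_1$, their intersection $\G_1(R)\cap\G_2$ is a sublattice of $\G_1$; likewise $R\G_1(R)=\G_1(R)$ (because $\G_1(R)=\G_1\cap R\G_1$ is $R$-invariant? no — $R\G_1(R)=R\G_1\cap R^2\G_1\ne\G_1(R)$ in general). Avoiding that trap: take $\Gamma_0:=\G_1(R)$ and $\Gamma_0':= R^{-1}\Gamma_0=R^{-1}\G_1\cap\G_1$, both sublattices of $\G_1$. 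Then $\Gamma_0\cap\G_2$ has finite index in $\G_2$ and $\Gamma_0'\cap\G_2$ has finite index in $\G_2$, so $\Gamma_0\cap\Gamma_0'\cap\G_2$ has finite index in $\G_2$; applying $R$, $R(\Gamma_0\cap\Gamma_0'\cap\G_2)=\Gamma_0'\cap\Gamma_0\cap R\G_2$ wait $R\Gamma_0'=\Gamma_0$ and $R\Gamma_0=R\G_1\cap R^2\G_1$. This is getting delicate; the cleanest fix is to intersect with $R^{-1}\G_2$ as well: let $\Delta:=\G_2\cap R\G_2\cap R^{-1}\G_2\cap\cdots$ — no. I would ultimately argue: $\G_2\cap R\G_2 \supseteq \G_1(R)\cap R\G_1(R)\cap\G_2\cap R\G_2 =: \Delta$, and show $\Delta$ has finite index in $\G_2$ by writing it as $(\G_1(R)\cap\G_2)\cap R(\G_1(R)\cap\G_2)$, using that $R$ maps the finite-index sublattice $\G_1(R)\cap\G_2$ of $\G_2$... but $R\G_2\ne\G_2$. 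The genuine resolution, which I'd present carefully: both $\G_1(R)\cap\G_2$ and $R^{-1}(\G_1(R)\cap\G_2) = (R^{-1}\G_1(R))\cap R^{-1}\G_2$ — and one shows $R^{-1}\G_2$ is commensurate with $\G_2$ because $R\in\OC(\G_1)$ forces $\G_1$ commensurate with $R\G_1$ hence $R^{-1}\G_1$ with $\G_1$ hence (passing to the sublattice $\G_2$) $R^{-1}\G_2$ with $\G_2$ — making $\G_2\cap R^{-1}\G_2$ a finite-index sublattice of $\G_2$. Then $\Delta := (\G_2\cap R^{-1}\G_2)\cap\G_1(R)$ is a finite-index sublattice of $\G_2$, and $R\Delta\subseteq R\G_2\cap\G_2\cap R\G_1(R)$; since $\G_2\cap R\G_2\supseteq R\Delta$ and $[\G_2:R\Delta]=[R\G_2:R^{-1}\cdot R\Delta]$... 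I would organize this so that the final punchline is: $\G_2\cap R\G_2$ contains the lattice $\Delta\cap R\Delta$ which has finite index in $\G_2$, proving $R\in\OC(\G_2)$; by symmetry (swapping the roles, using that a sublattice relation makes everything work the same way upward via commensurability of $\G_1$ with $\G_2$), $\OC(\G_2)\subseteq\OC(\G_1)$ as well, and the two inclusions give the claim. I expect the bookkeeping of finite indices through the map $R$ — specifically keeping straight which lattices are genuinely $R$-invariant versus merely commensurate with their $R$-images — to be the only real subtlety; everything else is a routine application of the fact that intersections and sums of pairwise-commensurate lattices are again lattices of finite index.
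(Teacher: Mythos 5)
The paper does not actually prove Fact~\ref{ocsub}; it is quoted from \cite{B97}, so there is no in-paper argument to compare against. Judged on its own, your proposal does land on a correct proof, but only after a long sequence of abandoned attempts, and the version that works is the standard one: commensurability of full-rank lattices is an equivalence relation that is preserved by any linear bijection, a lattice is commensurate with each of its finite-index sublattices, and hence $R\in\OC(\G_1)$ gives the chain $R^{-1}\G_2\,\sim\,R^{-1}\G_1\,\sim\,\G_1\,\sim\,\G_2$; applying $R$ yields $\G_2\sim R\G_2$, i.e.\ $R\in\OC(\G_2)$, and reading the chain with the roles of $\G_1$ and $\G_2$ exchanged (only their commensurability is used, which is symmetric) gives the reverse inclusion. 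Two remarks. First, the one nontrivial ingredient, which you invoke only in the parenthetical ``passing to the sublattice,'' is transitivity of commensurability; it rests on the Poincar\'e-type fact that the intersection of two finite-index subgroups has finite index, together with the observation that a full-rank subgroup of a lattice automatically has finite index in it (so finite index in $\G_2$ already forces finite index in $R\G_2$, a point you leave implicit). That lemma should be stated and proved explicitly rather than waved at in the closing sentence. Second, once you have established $\G_2\sim R^{-1}\G_2$, you are done --- apply $R$ --- so the subsequent construction of $\Delta=(\G_2\cap R^{-1}\G_2)\cap\G_1(R)$ and of $\Delta\cap R\Delta$ is redundant; it is not wrong, but it obscures the fact that the proof is three lines long. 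Strip out the false starts (the several inclusions you yourself note ``go the wrong way'') and keep only the transitivity chain; as written, the text is a record of a search for a proof rather than a proof.
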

		
		For instance, the three cubic lattices share the same coincidence isometries.  The same holds for the two four-dimensional hypercubic lattices.  Using
		a shelling argument~\cite{GBW74}, it is known that the coincidence index for a given coincidence isometry is the same for all three cubic lattices (see
		Fact~\ref{coincindcubic}).  However, this is clearly not the case for the four-dimensional hypercubic lattices because they have different point groups, and
		moreover, do not share the same set of coincidence indices~\cite{B97,Z06}.  
		
		In general, even though the groups of coincidence isometries of a lattice $\G_1$ and a sublattice $\G_2$ are the same, the coincidence indices and corresponding 
		multiplicities with respect to $\G_1$ and $\G_2$ may be different.  Let $R\in \OC(\G_1)=\OC(\G_2)$, and denote by $\S_1(R)$ and $\S_2(R)$ the coincidence
		indices of $R$ with respect to $\G_1$ and $\G_2$, respectively.  From the inclusions $\G_2(R)\subseteq\G_2\subseteq\G_1$ and $\G_2(R)\subseteq\G_1(R)\subseteq \G_1$ 
		we can infer the following result~\cite{B97}.  Here, we denote $a$ divides $b$ by $a\mid b$.
		
		\begin{fact}\label{coinindsubknown}
			Let $\G_2$ be a sublattice of $\G_1$ of index $m$.  If $R\in \OC(\G_1)$, then $\S_1(R)\mid m\S_2(R)$.
		\end{fact}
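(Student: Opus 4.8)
The plan is to compute the index $[\G_1:\G_2(R)]$ in two different ways, using multiplicativity of the index in a tower of subgroups, and then simply read off the claimed divisibility. No new geometric input is needed beyond Fact~\ref{ocsub}.

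First I would note that, by Fact~\ref{ocsub}, $\OC(\G_1)=\OC(\G_2)$, so $R\in\OC(\G_2)$ as well; hence $\G_2(R)=\G_2\cap R\G_2$ is genuinely a sublattice of $\G_2$, and therefore a finite-index sublattice of $\G_1$. Likewise $\G_1(R)=\G_1\cap R\G_1$ is a finite-index sublattice of $\G_1$. From $\G_2\subseteq\G_1$ and $R\G_2\subseteq R\G_1$ one obtains $\G_2(R)=\G_2\cap R\G_2\subseteq\G_1\cap R\G_1=\G_1(R)$, so that both chains
\[
\G_2(R)\subseteq\G_1(R)\subseteq\G_1
\qquad\text{and}\qquad
\G_2(R)\subseteq\G_2\subseteq\G_1
\]
consist of finite-index inclusions.

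Next, applying the tower law along the first chain gives
\[
[\G_1:\G_2(R)]=[\G_1:\G_1(R)]\,[\G_1(R):\G_2(R)]=\S_1(R)\,[\G_1(R):\G_2(R)],
\]
while along the second chain it gives
\[
[\G_1:\G_2(R)]=[\G_1:\G_2]\,[\G_2:\G_2(R)]=m\,\S_2(R).
\]
Comparing the two expressions, $\S_1(R)$ divides $[\G_1:\G_2(R)]=m\,\S_2(R)$, which is exactly the assertion.

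I do not expect any real obstacle here: the whole argument is multiplicativity of the subgroup index, with Fact~\ref{ocsub} supplied only to guarantee that $\G_2(R)$ is a sublattice (and not merely a subgroup of infinite index) so that the indices involved are finite. The one point deserving an explicit line is the inclusion $\G_2(R)\subseteq\G_1(R)$, but that follows immediately from monotonicity of intersection under $\G_2\subseteq\G_1$ and $R\G_2\subseteq R\G_1$.
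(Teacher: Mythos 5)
Your argument is correct and is precisely the route the paper indicates: it derives the Fact from the two inclusion chains $\G_2(R)\subseteq\G_1(R)\subseteq\G_1$ and $\G_2(R)\subseteq\G_2\subseteq\G_1$ via multiplicativity of the index, exactly as you do. Your write-up merely makes explicit the points the paper leaves implicit (finiteness of the indices via Fact~\ref{ocsub} and the inclusion $\G_2(R)\subseteq\G_1(R)$).
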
	
		
		\noindent By considering the so-called dual lattices of $\G_1$ and $\G_2$, one also obtains 
		\begin{equation}\label{div2}
			\S_2(R)\mid m\S_1(R).
		\end{equation}
		Both divisibility conditions imply the well-known bound on $\S_2(R)$: 
		\begin{equation}\label{bound}
			\tfrac{1}{m}\S_1(R)\leq \S_2(R)\leq m\S_1(R).
		\end{equation}
		In Sections~\ref{coincindexwrt} and ~\ref{colcoin}, we characterize
		when $\S_2(R)=\S_1(R)$ and give a formula for $\S_2(R)$ in terms of $\S_1(R)$ by looking at certain colorings of lattices.
		
		A \emph{coloring} of the lattice $\G_1$ by $m$ colors is an onto mapping $c:\G_1\rightarrow C$, where $C$ is a set of $m$ elements, called 
		colors~\cite{S84}.  If $\G_2$ is a sublattice of
		$\G_1$ of index $m$, then a \emph{coloring of $\G_1$ determined by $\G_2$} is a coloring of $\G_1$ by $m$ colors wherein two points of $\G_1$ are assigned
		the same color if and only if they belong to the same coset of $\G_2$.  In this case, the set of colors $C$ can be identified with the quotient group
		$\G_1/\G_2$ so that the color mapping $c$ is simply the canonical projection of $\G_1$ onto $\G_1/\G_2$ whose kernel is $\G_2$.  
		
		Denote by $G$ the symmetry group of $\G_1$ and fix a coloring $c$ of $\G_1$.  A symmetry $h$ in $G$ is said to be a \emph{color symmetry} of $c$ if it 
		permutes the colors in the coloring, that is, all and only those elements of $\G_1$ having the same color are mapped by $h$ to a fixed color.  We denote the set of
		all color symmetries of $c$ by $H_c$, that is, 
		\[H_c=\set{h\in G\mid\exists\,\sigma_h\in S_C,\forall\,\ell\in\G_1, c(h(\ell))=\sigma_h(c(\ell))},\]
		where $S_C$ is the group of permutations on the set of colors $C$.  The set $H_c$ forms a group and is called the \emph{color group} or \emph{color symmetry group} of $c$
		~\cite{S84,GS87,DLPF07}.  The mapping $P:H_c\rightarrow S_C$ with $h\mapsto\sigma_h$ defines a group homomorphism, and thus the group $H_c$ acts on $C$.  The
		kernel of $P$, \[K_c=\set{k\in H_c:c(k(\ell))=c(\ell),\,\forall\,\ell\in\G_1},\] 
		is the subgroup of $H_c$ whose elements fix the colors in $c$.  In other words, $K_c$ is the symmetry group of the colored lattice.  
		
		We shall generalize the notion of a color symmetry to that of a color coincidence.  Also, we associate the property that a coincidence isometry $R$
		of $\G_1$ is a color coincidence of the coloring of $\G_1$ determined by a sublattice of $\G_2$ with a relationship between $\S_1(R)$ and $\S_2(R)$.

	\section{Coincidence index with respect to a sublattice}\label{coincindexwrt}
		Our first goal is to give a relationship between the coincidence indices with respect to a lattice and a sublattice by means of the coloring of the lattice 
		induced by the sublattice.		
		Unless otherwise stated, $\G_1$ is taken throughout to be a lattice having $\G_2$ as a sublattice of index $m$, and we write
		$\G_1=\bigcup_{j=0}^{m-1}(c_j+\G_2)$ with $c_0\vcentcolon=0$.  Consider the coloring of $\G_1$ determined by $\G_2$, where we label the color of the
		coset	$c_j+\G_2$ by $c_j$.

		Fix an $R\in \OC(\G_1)=\OC(\G_2)$.  Consider the following subgroups of $\G_1/\G_2$\,:
		\begin{equation}\label{JK}
			\begin{aligned}
				J&\vcentcolon=\set{c_j+\G_2\in\G_1/\G_2:(c_j+\G_2)\cap\G_1(R^{-1})\neq \varnothing},\\
				K&\vcentcolon=\set{c_k+\G_2\in\G_1/\G_2:(c_k+\G_2)\cap\G_1(R)\neq\varnothing}
			\end{aligned}
		\end{equation}
		The subgroups $J$ and $K$ are nonempty because both have $\G_2=c_0+\G_2$ as an element.  They induce partitions of $\G_1(R^{-1})$ and $\G_1(R)$, respectively,
		given by 
		\begin{equation}\label{partJK}
		 	\begin{aligned}
			 	\G_1(R^{-1})&=\bigcup_{c_j+\G_2\in J}{(c_j+\G_2)\cap\G_1(R^{-1})}\text{ and}\\
			 	\G_1(R)&=\bigcup_{c_k+\G_2\in K}{(c_k+\G_2)\cap\G_1(R)}.
		 	\end{aligned}
		\end{equation}
		The partitions in Eq.~\eqref{partJK} can be thought of as colorings of $\G_1(R^{-1})$ and $\G_1(R)$, respectively, where the colors are inherited from the
		coloring of $\G_1$ determined by $\G_2$.  We shall refer to these colorings as the \emph{induced colorings of $\G_1(R^{-1})$ and $\G_1(R)$ by $c$}. The
		set of colors in the colorings of $\G_1(R^{-1})$ and $\G_1(R)$ are
		\begin{equation}\label{CRinvR}
			\begin{aligned}
				C_{R^{-1}}&\vcentcolon=\set{c_j:c_j+\G_2\in J}\text{ and}\\
				C_R&\vcentcolon=\set{c_k:c_k+\G_2\in K},
			\end{aligned}
		\end{equation}
		respectively.  In addition, we may assume that the coset representatives of $\G_2$ in $\G_1$ are chosen so that $c_j\in\G_1(R^{-1})$ whenever $c_j\in C_{R^{-1}}$,
		and $c_k\in\G_1(R)$ whenever $c_k\in C_R$. 
		
		In fact, there are two possible colorings of $\G_1(R)$.  On the one hand, we have the coloring of $\G_1(R)$ induced by $c$ with colors from $C_R$.  On the other hand,
		a coloring of $R\G_1(R^{-1})=\G_1(R)$ is obtained when $R$ is applied to the induced coloring of $\G_1(R^{-1})$ by $c$ with colors from $C_{R^{-1}}$.  We compare both 
		colorings of $\G_1(R)$ by looking at the relation $\sigma_R$ from $C_{R^{-1}}$ to $C_R$ given by
		\begin{equation}\label{sigma}
			\sigma_R=\big\{(c_j,c_k)\in C_{R^{-1}}\times C_R:R[(c_j+\G_2)\cap\G_1(R^{-1})]\cap
			[(c_k+\G_2)\cap\G_1(R)]\neq\varnothing\big\}.
		\end{equation}
		Here, $(c_j,c_k)\in\sigma_R$ means that some of the points colored $c_j$ in the coloring of $\G_1(R^{-1})$ are mapped by $R$ onto points with color $c_k$ in
		the coloring of $\G_1(R)$.  
	
    Note that 
		\begin{equation}\label{intandeqthm}
			[\G_2\cap\G_1(R)]\cap R[\G_2\cap\G_1(R^{-1})]=\G_2(R),							
		\end{equation}
		which tells us that $(c_0,c_0)\in\sigma_R$ and so $\sigma_R$ is always nonempty~\cite{LZ11}.  Moreover, $\G_2(R)$ is made up of exactly those
		points colored $c_0$ in the coloring of $\G_1(R)$ whose preimages under $R$ are also points colored $c_0$ in the coloring of $\G_1(R^{-1})$.
		
		\begin{figure}[ht]
			\centering
			\includegraphics{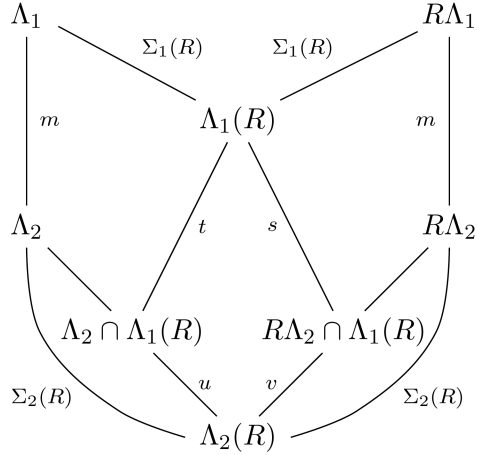}
			\caption{\label{latticediag}Subgroup diagram showing the relationship between $\G_1, \G_2(R)$ and other sublattices together with their indices}
		\end{figure}

		See Figure~\ref{latticediag} for the relationships among the various lattices and the corresponding indices.  In particular, we define
		\begin{equation}\label{stuv}	
			\begin{aligned}
				s&\vcentcolon=[\G_1(R):R\G_2\cap\G_1(R)],\\
				t&\vcentcolon=[\G_1(R):\G_2\cap\G_1(R)],\\
				u&\vcentcolon=[\G_2\cap\G_1(R):\G_2(R)],\\
				v&\vcentcolon=[R\G_2\cap\G_1(R):\G_2(R)].
			\end{aligned}
		\end{equation}

		For further reference, we note some relationships between sublattices of a lattice $\G_1$.

		\begin{lemma}\label{SIT}
			Let $\G_2$ and $\G_2'$ be sublattices of the lattice $\G_1$.  Then the following holds.
			\begin{enumerate}
				\item $[\G_2':\G_2\cap\G_2']=
				\abs{\set{\ell+\G_2\in\G_1/\G_2:(\ell+\G_2)\cap\G_2'\neq\varnothing}}
				$
	
				\item $[\G_2':\G_2\cap\G_2']$ divides $[\G_1:\G_2]$

				\item If $(\ell+\G_2)\cap\G_2'\neq\varnothing$ then $(\ell+\G_2)\cap\G_2'$ is a coset of $\G_2\cap\G_2'$.
			\end{enumerate}
		\end{lemma}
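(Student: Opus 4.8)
The plan is to deduce all three statements from a single observation: restrict the canonical projection $\pi\colon\G_1\to\G_1/\G_2$ (whose kernel is $\G_2$) to the sublattice $\G_2'$. This is essentially the second isomorphism theorem for the finitely generated abelian group $\G_1$, and the three items are its concrete manifestations. It is cleanest to prove (iii) first, then use it for (i), and obtain (ii) from (i).

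For (iii), suppose $(\ell+\G_2)\cap\G_2'\neq\varnothing$ and choose $x$ in this intersection. If $y$ is any other element of $(\ell+\G_2)\cap\G_2'$, then $y-x\in\G_2$ because $x$ and $y$ lie in the same $\G_2$-coset, and $y-x\in\G_2'$ because $x,y\in\G_2'$; hence $y-x\in\G_2\cap\G_2'$. Conversely $x+(\G_2\cap\G_2')\subseteq(\ell+\G_2)\cap\G_2'$ is immediate. Thus $(\ell+\G_2)\cap\G_2'=x+(\G_2\cap\G_2')$ is a single coset of $\G_2\cap\G_2'$ in $\G_2'$, proving (iii).

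For (i), note that a coset $\ell+\G_2$ meets $\G_2'$ if and only if $\ell+\G_2=\pi(x)$ for some $x\in\G_2'$, i.e. if and only if $\ell+\G_2\in\pi(\G_2')$; hence the set whose cardinality appears on the right-hand side of (i) is exactly the subgroup $\pi(\G_2')$ of $\G_1/\G_2$. The nonempty fibres of the surjection $\pi|_{\G_2'}\colon\G_2'\to\pi(\G_2')$ are precisely the sets $(\ell+\G_2)\cap\G_2'$ with $\ell+\G_2\in\pi(\G_2')$, and by (iii) each of these is a coset of $\ker(\pi|_{\G_2'})=\G_2\cap\G_2'$. Therefore $\pi|_{\G_2'}$ descends to a bijection $\G_2'/(\G_2\cap\G_2')\to\pi(\G_2')$, so $\lvert\pi(\G_2')\rvert=[\G_2':\G_2\cap\G_2']$, which is (i).

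Finally, (ii) is immediate: $\pi(\G_2')$ is a subgroup of the finite group $\G_1/\G_2$ of order $[\G_1:\G_2]$, so by Lagrange's theorem $[\G_2':\G_2\cap\G_2']=\lvert\pi(\G_2')\rvert$ divides $[\G_1:\G_2]$; in particular this index is finite, so no finiteness concerns arise. The argument is wholly routine; there is no genuine obstacle, and the only point worth a sentence is that, since $\G_2$ and $\G_2'$ are finite-index sublattices of $\G_1$ they are commensurate, so $\G_2\cap\G_2'$ is again a lattice of full rank, as recalled in the preliminaries.
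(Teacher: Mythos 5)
Your proof is correct and follows essentially the same route as the paper: identifying the set of cosets meeting $\G_2'$ with the image of $\G_2'$ under the projection $\G_1\to\G_1/\G_2$ (i.e.\ with $(\G_2+\G_2')/\G_2$), invoking the second isomorphism theorem (which you spell out via the fibres of the restricted projection), and concluding (ii) by Lagrange. The only difference is expository --- you prove (iii) first and use it to derive (i), whereas the paper cites the isomorphism theorem directly and notes (iii) is clear --- but the mathematical content is identical.
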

		\begin{proof}
			The second isomorphism theorem for groups implies 
			\[
				\G_2'/(\G_2\cap\G_2')\cong(\G_2+\G_2')/\G_2
				=\set{\ell+\G_2\in\G_1/\G_2:(\ell+\G_2)\cap\G_2'\neq\varnothing},
			\]
			which proves (i).  Since $\G_2+\G_2'$ is a sublattice of $\G_1$, $(\G_2+\G_2')/\G_2$ is a subgroup of $\G_1/\G_2$.  Thus, ${[\G_2+\G_2':\G_2]}=
			[\G_2':\G_2\cap\G_2']$ divides $[\G_1:\G_2]$ by Lagrange's Theorem.  The last statement is clear.
		\end{proof}

		Using Lemma~\ref{SIT}, we can now give restrictions on the values of $s$, $t$, $u$, and $v$, as well as interpretations of these values in relation to the
		colorings of $\G_1(R^{-1})$ and $\G_1(R)$ determined by $\G_2$.  These results are explicitly stated in the following theorem, which was announced in
		\cite{LZ11} without proof.

		\begin{thm}\label{coloring}
			Let $\G_2$ be a sublattice of $\G_1$ of index $m$, $R\in \OC(\G_1)$, and $s,t,u,v$ be as in Eq.~\eqref{stuv}.
			\begin{enumerate}
				\item Then $s\mid m$, $t\mid m$, $u\mid s$, $v\mid t$, and
					\begin{equation}\label{s2r}
						\S_2(R)=\frac{t\cdot u\cdot\S_1(R)}{m}=\frac{s\cdot v\cdot\S_1(R)}{m}.
					\end{equation}
			
				\item Consider the coloring of $\G_1$ determined by $\G_2$ where each coset $c_j+\G_2$ is assigned the color $c_j$ for $0\leq j\leq m-1$, with $c_0=0$. 
				Then
					\begin{enumerate}
						\item $s$ and $t$ are the number of colors in the coloring of $\G_1(R^{-1})$ and $\G_1(R)$, respectively. 
						
						\item $u$ is the number of colors $c_j$ with the property that some points of $\G_1(R^{-1})$ colored $c_j$ are mapped by $R$ to points
						colored $c_0$ in the coloring of $\G_1(R)$.
						
						\item $v$ is the number of colors $c_k$ with the property that some points of $\G_1(R^{-1})$ colored $c_0$ are mapped by $R$ to points 
						colored $c_k$ in the coloring of $\G_1(R)$.
					\end{enumerate}
			\end{enumerate}
		\end{thm}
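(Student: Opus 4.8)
The plan is to read every assertion directly off the subgroup diagram in Figure~\ref{latticediag}, using only Lemma~\ref{SIT}, the second isomorphism theorem, and multiplicativity of the group index. First I would record the basic inclusions and one identity: since $\G_2\subseteq\G_1$ and $R\G_2\subseteq R\G_1$, one has $\G_2(R)=\G_2\cap R\G_2\subseteq\G_1\cap R\G_1=\G_1(R)$, and likewise $\G_2\cap\G_1(R)$ and $R\G_2\cap\G_1(R)$ are sublattices of $\G_1(R)$ containing $\G_2(R)$; moreover $\G_2(R)=\bigl(\G_2\cap\G_1(R)\bigr)\cap\bigl(R\G_2\cap\G_1(R)\bigr)$ because $\G_2\cap R\G_2$ already lies inside $\G_1(R)$. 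I would also note the transport identities $R\,\G_1(R^{-1})=\G_1(R)$, $R\bigl(\G_2\cap\G_1(R^{-1})\bigr)=R\G_2\cap\G_1(R)$ and $R\,\G_2(R^{-1})=\G_2(R)$, which are what let statements about $\G_1(R^{-1})$ be translated into statements about $\G_1(R)$.

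For part~(i), the divisibility $t\mid m$ is Lemma~\ref{SIT}(ii) applied to the sublattices $\G_2$ and $\G_1(R)$ of $\G_1$, while $s\mid m$ follows from the same lemma applied inside the lattice $R\G_1$ to its sublattices $R\G_2$ and $\G_1(R)$, using $[R\G_1:R\G_2]=[\G_1:\G_2]=m$. For $u\mid s$, set $A=\G_2\cap\G_1(R)$ and $B=R\G_2\cap\G_1(R)$; then the second isomorphism theorem gives $u=[A:A\cap B]=[A+B:B]$, which divides $[\G_1(R):B]=s$ since $A+B$ is a subgroup of $\G_1(R)$, and the argument for $v\mid t$ is symmetric. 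Finally, Eq.~\eqref{s2r} follows from multiplicativity of the index along the two chains $\G_2(R)\subseteq\G_2\cap\G_1(R)\subseteq\G_1(R)\subseteq\G_1$ and $\G_2(R)\subseteq R\G_2\cap\G_1(R)\subseteq\G_1(R)\subseteq\G_1$, which express $[\G_1:\G_2(R)]$ as $\S_1(R)\,t\,u$ and as $\S_1(R)\,s\,v$; since also $[\G_1:\G_2(R)]=[\G_1:\G_2]\,[\G_2:\G_2(R)]=m\,\S_2(R)$, equating these yields the two formulas.

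For part~(ii), Lemma~\ref{SIT}(i) identifies the number of colors of $\G_1(R)$ with $|K|=[\G_1(R):\G_2\cap\G_1(R)]=t$, and the number of colors of $\G_1(R^{-1})$ with $|J|=[\G_1(R^{-1}):\G_2\cap\G_1(R^{-1})]$, which equals $[\G_1(R):R\G_2\cap\G_1(R)]=s$ by the transport identities; this proves~(a). For~(b), a point $x\in\G_1(R^{-1})$ satisfies $c(Rx)=c_0$ precisely when $Rx\in\G_2$, i.e.\ when $x\in R^{-1}\G_2\cap\G_1(R^{-1})$, so by Lemma~\ref{SIT}(i) the number of colors carried by that sublattice is $[R^{-1}\G_2\cap\G_1(R^{-1}):\G_2(R^{-1})]$ (using $\G_2\cap R^{-1}\G_2\subseteq\G_1(R^{-1})$), and applying $R$ this equals $[\G_2\cap\G_1(R):\G_2(R)]=u$. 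For~(c), the points of $\G_1(R^{-1})$ colored $c_0$ form $\G_2\cap\G_1(R^{-1})$, whose image under $R$ is $R\G_2\cap\G_1(R)$, so Lemma~\ref{SIT}(i) gives the number of colors among them as $[R\G_2\cap\G_1(R):\G_2(R)]=v$.

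The only delicate aspect is bookkeeping rather than any genuine difficulty: one must keep straight which of the four sublattices in Figure~\ref{latticediag} is being compared with which, and in particular verify simplifications such as $\G_2\cap R\G_2\cap\G_1(R)=\G_2(R)$ together with the $R$-transport identities tying the $R^{-1}$-side to the $R$-side. Once these are in place, each assertion is a one-line application of Lemma~\ref{SIT} (itself a packaging of the second isomorphism theorem and Lagrange's theorem) or of multiplicativity of the index.
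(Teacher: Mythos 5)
Your proof is correct and takes essentially the same route as the paper's: Eq.~\eqref{s2r} by comparing indices along the two chains through $\G_2(R)$ in Figure~\ref{latticediag}, and the divisibility and color-counting statements by applying Lemma~\ref{SIT} to the appropriate pairs of sublattices of $\G_1$, $R\G_1$, and $\G_1(R)$. The only cosmetic difference is that for (ii)(b)--(c) you count the colors on $R^{-1}\G_2\cap\G_1(R^{-1})$ and transport by $R$, whereas the paper counts cosets directly inside $\G_1(R)$ via the groups $D$ and $E$.
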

		\begin{proof}
			Comparing indices in Figure~\ref{latticediag} gives Eq.~\eqref{s2r}.

			If we apply Lemma~\ref{SIT} to the sublattices $\G_2$ and $\G_1(R)$ of $\G_1$, we obtain that $t=\abs{K}=\abs{C_R}$ (see Eq.~\eqref{JK} and 
			Eq.~\eqref{CRinvR}) and $t\mid m$.  Corresponding statements for $s$ are similarly proved by looking at the sublattices $R\G_2$ and $\G_1(R)$ of $R\G_1$. 
			
      To prove that $u\mid s$ and $v\mid t$, we characterize $u$ and $v$ by appropriate groups. To this end, we first mention that
			Lemma~\ref{SIT} implies that for all $c_j+\G_2\in J$ and $c_k+\G_2\in K$,
			\begin{equation}\label{JK2}
				\begin{aligned}
					R[(c_j+\G_2)\cap\G_1(R^{-1})]&=Rc_j+[R\G_2\cap\G_1(R)]\text{ and}\\
					(c_k+\G_2)\cap\G_1(R)&=c_k+[\G_2\cap\G_1(R)].
				\end{aligned}
			\end{equation}
			Let us consider the groups
			\begin{alignat*}{2}
				D&\vcentcolon=
					\{c_j+\G_2\in J: (Rc_j+[R\G_2\cap\G_1(R)])\cap
					[\G_2\cap\G_1(R)]\neq\varnothing\} \subseteq J\text{ and}
				\\
				E&\vcentcolon=
					\{c_k+\G_2\in K: (c_k+[\G_2\cap\G_1(R)])\cap
					[R\G_2\cap\G_1(R)]\neq\varnothing\} \subseteq K.
			\end{alignat*}
      Both groups are nonempty because of Eq.~\eqref{intandeqthm}.  Invoking Lemma~\ref{SIT} to the sublattices $\G_2\cap\G_1(R)$ and
			$R\G_2\cap\G_1(R)$ of $\G_1(R)$ shows that $u=\abs{D}$ and $u\mid s$. Similarly, $v=\abs{E}$ and $v\mid t$.  The interpretations of $u$ and $v$ 
			via colorings follow from $\abs{D}=\abs{\set{c_j:(c_j,c_0)\in\sigma_R}}$ and $\abs{E}=\abs{\set{c_k:(c_0,c_k)\in\sigma_R}}$ (see Eq.~\eqref{sigma} and Eq.~\eqref{JK2}).  
		\end{proof}

		Note that Eq.~\eqref{intandeqthm} and Lemma~\ref{SIT} imply that
		\begin{align*}
			(Rc_j+[R\G_2\cap\G_1(R)])\cap[\G_2\cap\G_1(R)]&=Rc_j+\G_2(R), \\ 
                \intertext{and}
			(c_k+[\G_2\cap\G_1(R)])\cap[R\G_2\cap\G_1(R)]&=c_k+\G_2(R),
		\end{align*}
		for all $c_j+\G_2 \in D$ whenever $Rc_j\in\G_2\cap\G_1(R)$, and $c_k+\G_2 \in E$ whenever $c_k\in R\G_2\cap\G_1(R)$.

		An immediate consequence of Theorem~\ref{coloring} is Fact~\ref{coinindsubknown}, \eqref{div2}, and inequality \eqref{bound}.  
		Notice that the divisibility condition in \eqref{div2}, and hence, the inequality $\S_2(R)\leq m\,\S_1(R)$, was obtained here
		without going through the dual lattices of $\G_1$ and $\G_2$.  This means that the said divisibility condition is true not only for lattices but also for
		$\Z$-modules (see~\cite{B97}), where~\eqref{div2} cannot be obtained by a duality argument because the notion of a dual module is in general not defined.
		
		An explicit example involving the square lattice and the Ammann-Beenker tiling can be found in~\cite{LZ11}.

	\section{Color coincidence}\label{colcoin}
    We have seen from the preceding section that $\S_2(R)$ can be characterized by colorings of $\G_1(R^{-1})$ and $\G_1(R)$.  However, the 
		corresponding groups $J,K,D$, and $E$ are not explicitly known in general.  Nonetheless, the problem simplifies in certain situations, which motivates the following definition.
 		
		\begin{defi}
			Let $\G_2$ be a sublattice of $\G_1$, and write $\G_1=\bigcup_{j=0}^{m-1}(c_j+\G_2)$ with $c_0=0$.  A coincidence isometry $R$ of $\G_1$ is said 
			to be a \emph{color coincidence} of the coloring of $\G_1$ determined by $\G_2$ if for any $c_j\in C_{R^{-1}}$ there exists a $c_k \in C_R$ such that 
			\begin{equation}\label{colcoinc}
				R[(c_j+\G_2)\cap\G_1(R^{-1})]=(c_k+\G_2)\cap\G_1(R).
			\end{equation}
		\end{defi}
	
		Eq.~\eqref{colcoinc} means that all points colored $c_j$ in the coloring of $\G_1(R^{-1})$ are mapped by $R$ to points colored
		$c_k$ in the coloring of $\G_1(R)$.  Thus, if $R\in\OC(\G_1)$ is a color coincidence, it defines an injection $\sigma_R$ from 
		$C_{R^{-1}}$ into $C_R$ via Eq.~\eqref{colcoinc}.  Whenever Eq.~\eqref{colcoinc} is satisfied, $R$ is said to send or map color 
		$c_j$ to color $c_k$.  Furthermore, we say that $R$ \emph{fixes the color $c_j$} if $R$ maps color $c_j$ onto itself. 
		
		In fact, the injection $\sigma_R$ is a bijection, which is seen as follows. By definition, $R$ sends the color $c_0=0\in C_{R^{-1}}$ 
		to exactly one color $c_k\in C_R$, that is,
		\[R[\G_2\cap\G_1(R^{-1})]=R\G_2\cap\G_1(R) =(c_k+\G_2)\cap\G_1(R).\]
		Since $0\in\G_2\cap\G_1(R^{-1})$ and $R(0)=0$, we obtain $c_k=c_0$. In other words,
		$R$ fixes the color $c_0$.  Thus, we have 
		$R[\G_2\cap\G_1(R^{-1})]=\G_2\cap\G_1(R)$, which implies that $\G_2\cap\G_1(R^{-1})$
		and $\G_2\cap\G_1(R)$ have the same index in $\G_1(R)$. It now follows from Theorem~\ref{coloring}
		that the colorings of $\G_1(R^{-1})$ and $\G_1(R)$ have the same number of colors, that is, 
		$\abs{C_{R^{-1}}}=\abs{C_R}$.  Hence $\sigma_R$ is indeed a bijection.
		Observe that this bijection is exactly the relation $\sigma_R$
		defined in Eq.~\eqref{sigma}. This means that an $R\in \OC(\G_1)$ is a color coincidence if and only if the
		relation $\sigma_R$ from $C_{R^{-1}}$ to $C_R$ in Eq.~\eqref{sigma} is a bijection.  
		In particular, if $R$ is a color coincidence and $C_{R^{-1}}=C_R$, then
		$\sigma_R$ is a permutation on $C_{R^{-1}}$.  Thus, a color coincidence $R\in P(\G_1)$ (that is, when $\G_1(R^{-1})=\G_1(R)=\G_1$) is a color symmetry of the
		coloring of $\G_1$. 
		 	
		We have seen that $R$ fixes the color $c_0$ if $R\in\OC(\G_1)$ is a color coincidence.
		Actually, the converse holds as well, as was mentioned in~\cite{LZ11} without proof, and which will be proved below.
		This generalizes the fact that a symmetry operation $R$ of an uncolored lattice $\G$ is a color symmetry of a sublattice coloring of $\G$ if and only if
		$R$ leaves the sublattice invariant (\emph{cf.}~\cite[Theorem 2(i)]{DLPF07}).	
		
		\begin{thm}\label{colcoinequiv}
			Let $\G_2$ be a sublattice of $\G_1$ of index $m$, with $\G_1=\bigcup_{j=0}^{m-1}(c_j+\G_2)$ where $c_0=0$. Then $R\in \OC(\G_1)$ is a color coincidence of
			the coloring of $\G_1$ determined by $\G_2$ if and only if $R$ fixes color $c_0$.
		\end{thm}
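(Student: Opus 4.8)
The forward direction is already established in the discussion preceding the theorem: if $R$ is a color coincidence, then since $0 \in \G_2 \cap \G_1(R^{-1})$ and $R(0) = 0 \in \G_2 \cap \G_1(R)$, the color $c_0$ is sent to $c_0$. So the plan is to prove the converse: assuming $R$ fixes color $c_0$, i.e. $R[\G_2 \cap \G_1(R^{-1})] = \G_2 \cap \G_1(R)$, show that for every $c_j \in C_{R^{-1}}$ there is some $c_k \in C_R$ with $R[(c_j + \G_2) \cap \G_1(R^{-1})] = (c_k + \G_2) \cap \G_1(R)$.

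The key idea: the hypothesis $R[\G_2 \cap \G_1(R^{-1})] = \G_2 \cap \G_1(R)$ forces $[\G_1(R) : \G_2 \cap \G_1(R^{-1})]_{R\G_1(R^{-1})}$... wait, let me think in terms of the already-available indices. Actually the discussion already shows: $R$ fixing $c_0$ means $R[\G_2\cap\G_1(R^{-1})] = \G_2 \cap \G_1(R)$, hence $\G_2\cap\G_1(R^{-1})$ and $\G_2 \cap \G_1(R)$ have the same index in $\G_1(R)$ — but wait, $\G_2 \cap \G_1(R^{-1})$ is a sublattice of $\G_1(R^{-1})$, and $R$ maps $\G_1(R^{-1})$ to $\G_1(R)$ and $\G_2\cap\G_1(R^{-1})$ to $\G_2\cap\G_1(R)$... hmm, so the index $[\G_1(R^{-1}) : \G_2\cap\G_1(R^{-1})] = [\G_1(R) : \G_2\cap\G_1(R)]$. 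In the notation of the theorem, applying the setup for $R^{-1}$ in place of $R$: $t$ for $R$ is $[\G_1(R):\G_2\cap\G_1(R)]$, and by symmetry the "$s$ for $R$" equals "$t$ for $R^{-1}$" which is $[\G_1(R^{-1}):\G_2\cap\G_1(R^{-1})]$. Wait — $s := [\G_1(R):R\G_2\cap\G_1(R)]$. Apply $R^{-1}$: $R^{-1}(\G_1(R)) = \G_1(R^{-1})$ and $R^{-1}(R\G_2 \cap \G_1(R)) = \G_2 \cap \G_1(R^{-1})$, so indeed $s = [\G_1(R^{-1}) : \G_2 \cap \G_1(R^{-1})]$. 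And by Theorem 3.6(ii)(a), $s = |C_{R^{-1}}|$ and $t = |C_R|$. So the hypothesis gives $s = t$, i.e. $|C_{R^{-1}}| = |C_R|$.

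Now the main argument. Each element $c_j \in C_{R^{-1}}$ has, by Lemma 3.4(iii) applied to $\G_2$ and $\G_1(R^{-1})$ in $\G_1$, the property that $(c_j + \G_2) \cap \G_1(R^{-1})$ is a coset of $\G_2 \cap \G_1(R^{-1})$; likewise each block $(c_k + \G_2) \cap \G_1(R)$ is a coset of $\G_2 \cap \G_1(R)$. So the induced colorings partition $\G_1(R^{-1})$ into $s$ cosets of $\G_2 \cap \G_1(R^{-1})$ and $\G_1(R)$ into $t = s$ cosets of $\G_2 \cap \G_1(R)$. Since $R$ is a bijection $\G_1(R^{-1}) \to \G_1(R)$ carrying the subgroup $\G_2 \cap \G_1(R^{-1})$ onto the subgroup $\G_2 \cap \G_1(R)$ (that's exactly the hypothesis), $R$ carries cosets of the former bijectively onto cosets of the latter. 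Therefore each block $(c_j + \G_2)\cap \G_1(R^{-1})$ — being a coset of $\G_2\cap\G_1(R^{-1})$ — maps under $R$ onto a single coset of $\G_2 \cap \G_1(R)$, which is one of the blocks $(c_k+\G_2)\cap\G_1(R)$. That is precisely Eq. (4.2), establishing that $R$ is a color coincidence.

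The main obstacle — really the only delicate point — is making sure the "blocks are cosets of a common subgroup" language is airtight: one must cite Lemma 3.4(iii) to know the blocks of the induced coloring of $\G_1(R^{-1})$ are genuine cosets of the fixed subgroup $\G_2 \cap \G_1(R^{-1})$ (and similarly for $\G_1(R)$), and then observe that a group isomorphism between two groups that restricts to an isomorphism of specified subgroups sends cosets to cosets. After that, counting ($s = t$ so the image cosets are exactly the $t$ blocks of the $\G_1(R)$-coloring, with no "leftover" blocks unaccounted for) closes the argument.

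\begin{proof}
	The forward implication is already contained in the discussion preceding the theorem: if $R$ is a color coincidence, then $R$ sends $c_0$ to some $c_k\in C_R$, and since $0\in\G_2\cap\G_1(R^{-1})$ with $R(0)=0$, necessarily $c_k=c_0$.

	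For the converse, suppose $R$ fixes the color $c_0$, that is,
	\[
		R[\G_2\cap\G_1(R^{-1})]=R\G_2\cap\G_1(R)=\G_2\cap\G_1(R).
	\]
	Thus $R$ restricts to a group isomorphism $\G_1(R^{-1})\to\G_1(R)$ that carries the subgroup $\G_2\cap\G_1(R^{-1})$ onto the subgroup $\G_2\cap\G_1(R)$; in particular these two subgroups have the same index, so applying Theorem~\ref{coloring}(ii)(a) to $R$ and to $R^{-1}$ gives $s=\abs{C_{R^{-1}}}=\abs{C_R}=t$.

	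Now fix $c_j\in C_{R^{-1}}$. By Lemma~\ref{SIT}(iii), applied to the sublattices $\G_2$ and $\G_1(R^{-1})$ of $\G_1$, the block $(c_j+\G_2)\cap\G_1(R^{-1})$ is a coset of $\G_2\cap\G_1(R^{-1})$ inside $\G_1(R^{-1})$. Since the restriction of $R$ is an isomorphism $\G_1(R^{-1})\to\G_1(R)$ taking $\G_2\cap\G_1(R^{-1})$ onto $\G_2\cap\G_1(R)$, it maps cosets of $\G_2\cap\G_1(R^{-1})$ onto cosets of $\G_2\cap\G_1(R)$. Hence $R[(c_j+\G_2)\cap\G_1(R^{-1})]$ is a coset of $\G_2\cap\G_1(R)$ in $\G_1(R)$, and therefore coincides with $(c_k+\G_2)\cap\G_1(R)$ for some $c_k\in C_R$ (again by Lemma~\ref{SIT}(iii) applied to $\G_2$ and $\G_1(R)$, the blocks of the induced coloring of $\G_1(R)$ are exactly the cosets of $\G_2\cap\G_1(R)$ in $\G_1(R)$). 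This is precisely Eq.~\eqref{colcoinc}, so $R$ is a color coincidence of the coloring of $\G_1$ determined by $\G_2$.
\end{proof}
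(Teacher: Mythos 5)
Your proof is correct and follows essentially the same route as the paper's: both hinge on the fact that the blocks of the induced colorings are cosets of $\G_2\cap\G_1(R^{-1})$ and $\G_2\cap\G_1(R)$ (Lemma~\ref{SIT}(iii), which is what underlies Eq.~\eqref{JK2}), and that the hypothesis makes $R$ carry the first subgroup onto the second, hence blocks onto blocks. The paper just phrases this as the explicit computation $R[(c_j+\G_2)\cap\G_1(R^{-1})]=Rc_j+[\G_2\cap\G_1(R)]=(Rc_j+\G_2)\cap\G_1(R)$, thereby also naming the image block.
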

		\begin{proof}
			It remains to show the ``if'' part. Suppose $R$ fixes color $c_0$, thus, ${R[\G_2\cap\G_1(R^{-1})]}={\G_2\cap\G_1(R)}$.  This implies that 
			$R\G_2\cap\G_1(R)=\G_2\cap\G_1(R)=\G_2(R)$ by Eq.~\eqref{intandeqthm}.  Hence, $u=v=1$ and $s=t$ by Eq.~\eqref{stuv}.  From Theorem~\ref{coloring}, the 
			colorings of $\G_1(R)$ and $\G_1(R^{-1})$ determined by $\G_2$ must have equal number of colors.  Under the assumption that $c_j\in\G_1(R^{-1})$ for each
			$c_j+\G_2\in J$, it then follows from Eq.~\eqref{JK2} that
			\begin{alignat*}{2}
				R[(c_j+\G_2)\cap\G_1(R^{-1})]&=Rc_j+[R\G_2\cap\G_1(R)]\\
				&=Rc_j+[\G_2\cap\G_1(R)]\\
				&=(Rc_j+\G_2)\cap\G_1(R).
			\end{alignat*}
			This means that $Rc_j+\G_2$ must be one of the cosets $c_k+\G_2\in K$.  Therefore, $R$ is	a color coincidence of the coloring of $\G_1$.
		\end{proof}
 		
		By Theorem~\ref{colcoinequiv}, it is then sufficient to check whether $R[\G_2\cap\G_1(R^{-1})]=\G_2\cap\G_1(R)$ is satisfied to identify if $R\in \OC(\G_1)$ is
		a color coincidence.

		Let $R$ be a color coincidence.  Equations~\eqref{JK2}, \eqref{partJK}, \eqref{intandeqthm}, and 
		Theorem~\ref{colcoinequiv}, yield the following coset decompositions of $\G_1(R)$ with respect to $\G_2(R)$:
		\[\G_1(R)=\bigcup_{c_k+\G_2\in K}[c_k+\G_2(R)]=\bigcup_{c_j+\G_2\in J}[Rc_j+\G_2(R)]\]
		In the language of colorings, this means that a color coincidence $R$ determines a permutation on the set of cosets of $\G_2(R)$ in $\G_1(R)$.  
		Here, $Rc_j+\G_2(R)=c_k+\G_2(R)$ if and only if $R$ sends color $c_j$ to $c_k$. 

    Theorem~\ref{coloring} is simpler for color coincidences since $u=v=1$. In particular, we have the following result.
		\begin{cor}\label{divandeq1}
			If $R$ is a color coincidence of the coloring of the lattice $\G_1$ determined by the sublattice $\G_2$, then $\S_2(R)\mid\S_1(R)$.
		\end{cor}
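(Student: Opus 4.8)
The plan is to read off the divisibility directly from the index formula in Theorem~\ref{coloring}(i), once it is specialized to a color coincidence. First I would recall, as established in the discussion surrounding Theorem~\ref{colcoinequiv}, that a color coincidence $R$ fixes the color $c_0$, so that $R[\G_2\cap\G_1(R^{-1})]=\G_2\cap\G_1(R)$; combining this with Eq.~\eqref{intandeqthm} forces $R\G_2\cap\G_1(R)=\G_2\cap\G_1(R)=\G_2(R)$. Reading off the four indices in Eq.~\eqref{stuv} then yields $u=v=1$ and $s=t$.

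Next I would substitute $u=1$ into the first expression of Eq.~\eqref{s2r}, obtaining $\S_2(R)=\dfrac{t\cdot\S_1(R)}{m}$. Since $t\mid m$ by Theorem~\ref{coloring}(i), I can write $m=tn$ with $n=m/t\in\N$, whence $\S_1(R)=n\cdot\S_2(R)$. As $n$ is a positive integer, this is precisely the assertion $\S_2(R)\mid\S_1(R)$. One could equally well argue from the second expression $\S_2(R)=\dfrac{s\cdot v\cdot\S_1(R)}{m}$ using $v=1$ and $s\mid m$; both routes give the same conclusion.

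There is essentially no obstacle here: the entire content is carried by Theorem~\ref{coloring} together with the characterization of color coincidences in Theorem~\ref{colcoinequiv}. The only point requiring a moment's care is the \emph{direction} of the divisibility --- that $\S_2(R)$ divides $\S_1(R)$ and not the reverse --- which is exactly what the integrality of the cofactor $m/t\ge 1$ guarantees.
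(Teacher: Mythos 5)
Your argument is correct and is exactly the paper's route: the paper derives this corollary from the observation that a color coincidence has $u=v=1$ (via Theorem~\ref{colcoinequiv} and Eq.~\eqref{intandeqthm}) combined with Eq.~\eqref{s2r} and the divisibility $t\mid m$. Nothing is missing; the cofactor $m/t$ is the integer witnessing $\S_2(R)\mid\S_1(R)$, just as you say.
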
		
			
    If the colorings of $\G_1(R)$ and $\G_1(R^{-1})$ contain all $m$ colors, that is, if $s=t=m$, then we get the following criterion for color coincidences.
		\begin{cor}\label{divandeq2}
			Let $\G_1$ be a lattice having $\G_2$ as a sublattice of index $m$. If $s=t=m$ in Eq.~\eqref{stuv}, then $R\in \OC(\G_1)$ is a color coincidence of the coloring 
			of $\G_1$ induced by $\G_2$ if and only if $\S_2(R)=\S_1(R)$.
		\end{cor}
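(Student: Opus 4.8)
The plan is to read everything off Theorem~\ref{coloring} together with the characterization of color coincidences in Theorem~\ref{colcoinequiv}; no new construction is needed. First I would specialize Eq.~\eqref{s2r} to the hypothesis $s=t=m$: it collapses to $\S_2(R)=u\cdot\S_1(R)=v\cdot\S_1(R)$, so in particular $u=v$ holds in this situation (since $\S_1(R)$ is a nonzero integer). This single identity is the engine of the whole argument, because it converts the statement ``$\S_2(R)=\S_1(R)$'' into the statement ``$u=v=1$''.

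For the ``only if'' direction I would argue as follows. Suppose $R$ is a color coincidence of the coloring of $\G_1$ determined by $\G_2$. As recalled in Section~\ref{colcoin} (this is precisely what makes Corollary~\ref{divandeq1} work), a color coincidence satisfies $R[\G_2\cap\G_1(R^{-1})]=\G_2\cap\G_1(R)=\G_2(R)$, hence $u=v=1$ by the definitions in Eq.~\eqref{stuv}. Substituting into the displayed identity gives $\S_2(R)=\S_1(R)$.

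For the ``if'' direction, assume $\S_2(R)=\S_1(R)$. Since $\S_1(R)$ is a positive integer, $\S_2(R)=u\cdot\S_1(R)$ forces $u=1$, and likewise $v=1$. By Eq.~\eqref{stuv}, $u=1$ means $\G_2\cap\G_1(R)=\G_2(R)$ and $v=1$ means $R\G_2\cap\G_1(R)=\G_2(R)$. Using $R\G_1(R^{-1})=R\G_1\cap\G_1=\G_1(R)$ one then computes
\[
R[\G_2\cap\G_1(R^{-1})]=R\G_2\cap\G_1(R)=\G_2(R)=\G_2\cap\G_1(R),
\]
so $R$ fixes the color $c_0$. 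By Theorem~\ref{colcoinequiv}, $R$ is a color coincidence of the coloring of $\G_1$ determined by $\G_2$, which finishes the proof.

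I do not expect a genuine obstacle here; the only points requiring care are (a) to invoke Theorem~\ref{colcoinequiv} for the backward implication rather than trying to verify Eq.~\eqref{colcoinc} coset by coset, and (b) to note explicitly that the hypothesis $s=t=m$ is exactly what turns the factors $tu/m$ and $sv/m$ in Eq.~\eqref{s2r} into $u$ and $v$, so that the equality of coincidence indices is equivalent to $u=v=1$. Without $s=t=m$ the equivalence fails, since then $\S_2(R)=\S_1(R)$ only forces $tu=sv=m$, not $u=v=1$.
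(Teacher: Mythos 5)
Your proposal is correct and follows exactly the route the paper intends: the corollary is stated as an immediate consequence of Eq.~\eqref{s2r} specialized to $s=t=m$ (so $\S_2(R)=u\,\S_1(R)=v\,\S_1(R)$) combined with the characterization from Theorem~\ref{colcoinequiv} and Eq.~\eqref{intandeqthm} that $R$ is a color coincidence if and only if $u=v=1$. Nothing further is needed.
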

		
		Denote by $H_{\G_1,\G_2}$ the set of all color coincidences of the coloring of a lattice $\G_1$ determined by a sublattice $\G_2$. Clearly, the identity
		isometry is in $H_{\G_1,\G_2}$.  In addition, it follows from the definition of a color coincidence that if $R\in H_{\G_1,\G_2}$, then so is $R^{-1}$.  The
		question of whether the product of two color coincidences is again a color coincidence, and in effect, whether $H_{\G_1,\G_2}$ forms a group, is yet to be
		resolved.  A step in answering this question is the following proposition.

		\begin{prop}\label{relprimecolcoin}
			Let $\G_2$ be a sublattice of $\G_1$ of index $m$, and $R_1, R_2\in H_{\G_1,\G_2}$.  If $\S_1(R_1)$ is relatively prime to $\S_1(R_2)$, then $R_2R_1\in H_{\G_1,\G_2}$.
		\end{prop}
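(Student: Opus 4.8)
The plan is to identify the coincidence site lattice $\G_1(R_2R_1)$ explicitly and then to recognize, via Theorem~\ref{colcoinequiv}, that $R_2R_1$ fixes the color $c_0$. Write $R_3:=R_2R_1$.

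\emph{Identifying $\G_1(R_3)$.} One always has
\[\G_1(R_2)\cap R_2\G_1(R_1)=\G_1\cap R_2\G_1\cap R_2R_1\G_1\subseteq\G_1(R_3),\]
and, applying Lemma~\ref{SIT}(ii) inside $R_2\G_1$ to the sublattices $\G_1(R_2)$ and $R_2\G_1(R_1)$ (together with $[\G_1:\G_1(R_2)]=\S_1(R_2)$), the index $[\G_1:\G_1(R_2)\cap R_2\G_1(R_1)]$ divides $\S_1(R_1)\S_1(R_2)$. Since $\S_1(R_1)$ and $\S_1(R_2)$ are coprime, the coincidence index with respect to $\G_1$ is multiplicative on this product, $\S_1(R_3)=\S_1(R_1)\S_1(R_2)$: the submultiplicativity $\S_1(R_3)\mid\S_1(R_1)\S_1(R_2)$ follows from Lemma~\ref{SIT} in the same manner, and a short greatest-common-divisor argument promotes it to an equality under coprimality (this fact is also well known; see \cite{B97}). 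Consequently $[\G_1:\G_1(R_2)\cap R_2\G_1(R_1)]$ is simultaneously a multiple of $\S_1(R_3)$ and a divisor of $\S_1(R_1)\S_1(R_2)=\S_1(R_3)$, so the inclusion above is an equality: $\G_1(R_3)=\G_1(R_2)\cap R_2\G_1(R_1)$. This is the one place where the coprimality hypothesis is genuinely needed, and I expect it to be the crux of the argument; the remaining steps are elementary.

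\emph{Peeling off $R_1$ and $R_2$.} By Theorem~\ref{colcoinequiv}, applied to $R_i$ and to $R_i^{-1}$ (recall $H_{\G_1,\G_2}$ is closed under inverses), one has for $i=1,2$ both $\G_2\cap\G_1(R_i)=\G_2(R_i)$, equivalently $\G_2\cap R_i\G_1=\G_2\cap R_i\G_2$, and $R_i\G_2\cap\G_1\subseteq\G_2$. Combining the first of these (for $i=2$) with the identification above,
\[\G_2\cap\G_1(R_3)=\G_2\cap\G_1(R_2)\cap R_2\G_1(R_1)=\G_2(R_2)\cap R_2\G_1(R_1).\]
A brief element chase now gives $\G_2\cap\G_1(R_3)\subseteq\G_2(R_3)$: if $x$ lies in the right-hand set, then $R_2^{-1}x\in\G_2$ because $x\in\G_2\cap R_2\G_1=\G_2\cap R_2\G_2$, while $R_2^{-1}x\in\G_1(R_1)$ because $x\in R_2\G_1(R_1)$, so $R_2^{-1}x\in\G_2\cap\G_1(R_1)=\G_2(R_1)$ since $R_1\in H_{\G_1,\G_2}$; hence $x\in\G_2(R_2)\cap R_2\G_2(R_1)\subseteq\G_2\cap R_2R_1\G_2=\G_2(R_3)$. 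As the reverse inclusion $\G_2(R_3)\subseteq\G_2\cap\G_1(R_3)$ always holds, we conclude $\G_2\cap\G_1(R_3)=\G_2(R_3)$.

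\emph{The inverse side and conclusion.} The statement just proved holds for any two elements of $H_{\G_1,\G_2}$ whose coincidence indices with respect to $\G_1$ are coprime; applying it with $R_1^{-1}$ and $R_2^{-1}$ in the roles of $R_2$ and $R_1$ (note $\S_1(R_i^{-1})=\S_1(R_i)$, and the relevant product is $R_1^{-1}R_2^{-1}=R_3^{-1}$) yields $\G_2\cap\G_1(R_3^{-1})=\G_2(R_3^{-1})$. Applying $R_3$ to this, and using $R_3\G_1(R_3^{-1})=\G_1(R_3)$ and $R_3[\G_2(R_3^{-1})]=R_3\G_2\cap\G_2=\G_2(R_3)$, gives $R_3\G_2\cap\G_1(R_3)=\G_2(R_3)$. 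Together with the previous paragraph,
\[R_3[\G_2\cap\G_1(R_3^{-1})]=R_3\G_2\cap\G_1(R_3)=\G_2(R_3)=\G_2\cap\G_1(R_3),\]
which is exactly the assertion that $R_3=R_2R_1$ fixes the color $c_0$ of the coloring of $\G_1$ determined by $\G_2$. By Theorem~\ref{colcoinequiv}, $R_2R_1\in H_{\G_1,\G_2}$, as claimed.
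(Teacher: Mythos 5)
Your proof is correct and takes essentially the same route as the paper's: the crux in both is the identification $\G_1(R_2R_1)=\G_1\cap R_2\G_1\cap R_2R_1\G_1$ under the coprimality hypothesis, followed by an application of Theorem~\ref{colcoinequiv} to each factor. The only real differences are cosmetic: the paper imports that identification from \cite[Corollary 3]{Z10} and concludes via the set identities $R_i\G_2\cap\G_1=\G_2\cap R_i\G_1$, whereas you re-derive the identification by index counting (making the step self-contained) and finish with an element chase combined with the inverse-side symmetry.
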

		\begin{proof}
			Since $\S_1(R_1)$ and $\S_1(R_2)$ are relatively prime, $\G_1(R_2R_1)=\G_1\cap R_2\G_1\cap R_2R_1\G_1$ by~\cite[Corollary 3]{Z10}.  Applying
			Theorem~\ref{colcoinequiv} and Eq.~\eqref{intandeqthm} to $R_1$, one obtains \[R_1\G_2\cap\G_1=R_1\G_2\cap\G_1(R_1)=\G_2\cap\G_1(R_1)=\G_2\cap
			R_1\G_1.\] Similarly, $R_2\G_2\cap\G_1=\G_2\cap R_2\G_1$.  These three equalities yield 
			\[R_2R_1\G_2\cap\G_1(R_2R_1)=\G_2\cap\G_1(R_2R_1).\]
			Thus, $R_2R_1$ fixes color $c_0$, which means that $R_2R_1\in H_{\G_1,\G_2}$ by Theorem~\ref{colcoinequiv}.
		\end{proof}

	\section{Special Cases}	
		We now apply the theory developed in the previous two sections to some special cases.  First, assume that $\G_1(R)\subseteq\G_2$.  Then all points of $\G_1(R)$
		have the same color, or in other words, $t=1$.  Hence, $\S_2(R)\mid\S_1(R)$.  More generally, we have the following result.
		\begin{lemma}\label{spcasecor1}
			Let $\G_2$ be a sublattice of $\G_1$ with $[\G_1:\G_2]=m$, and $R\in \OC(\G_1)$.  If $\G_1(R)$ or $\G_1(R^{-1})$ is a sublattice of $\G_2$, then 
			$\S_2(R)\mid\S_1(R)$.  In particular, both $\G_1(R)$ and $\G_1(R^{-1})$ are sublattices of $\G_2$ if and only if $\S_2(R)=(1/m)\S_1(R)$.
		\end{lemma}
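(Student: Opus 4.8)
The plan is to read everything off Theorem~\ref{coloring}, in particular off the factorization $\S_2(R)=\tfrac{t\,u\,\S_1(R)}{m}=\tfrac{s\,v\,\S_1(R)}{m}$ of Eq.~\eqref{s2r} together with the divisibilities $s\mid m$, $t\mid m$, $u\mid s$, $v\mid t$. So the first step is to translate the geometric hypotheses into statements about $s$ and $t$ from Eq.~\eqref{stuv}. On the $\G_1(R)$-side this is immediate: $\G_1(R)\subseteq\G_2$ is equivalent to $\G_2\cap\G_1(R)=\G_1(R)$, i.e.\ to $t=1$ (this is precisely the observation made just before the lemma that $\G_1(R)$ then carries only the color $c_0$). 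On the $\G_1(R^{-1})$-side I would use $R\,\G_1(R^{-1})=R\G_1\cap\G_1=\G_1(R)$ to rewrite $R\G_2\cap\G_1(R)=R\bigl(\G_2\cap\G_1(R^{-1})\bigr)$, so that $s=[\G_1(R):R\G_2\cap\G_1(R)]=[\G_1(R^{-1}):\G_2\cap\G_1(R^{-1})]$; hence $\G_1(R^{-1})\subseteq\G_2$ is equivalent to $s=1$. (Alternatively one can just quote Theorem~\ref{coloring}(ii)(a): $s$ is the number of colors appearing in $\G_1(R^{-1})$, so $s=1$ says $\G_1(R^{-1})\subseteq c_0+\G_2=\G_2$.)

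Given these translations, the first assertion is a one-line computation. If $\G_1(R)\subseteq\G_2$ then $t=1$, so Eq.~\eqref{s2r} gives $\S_2(R)=\tfrac{u}{m}\,\S_1(R)$; since $u\mid s\mid m$, the positive integer $m/u$ satisfies $(m/u)\,\S_2(R)=\S_1(R)$, whence $\S_2(R)\mid\S_1(R)$. If instead $\G_1(R^{-1})\subseteq\G_2$ then $s=1$, so $u\mid s=1$ forces $u=1$ and Eq.~\eqref{s2r} gives $\S_2(R)=\tfrac{t}{m}\,\S_1(R)$ with $t\mid m$, and the same argument applies.

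For the ``in particular'' clause: if $\G_1(R)$ and $\G_1(R^{-1})$ are both contained in $\G_2$, then $s=t=1$, hence $u\mid s$ and $v\mid t$ force $u=v=1$, and Eq.~\eqref{s2r} yields $\S_2(R)=\tfrac{1}{m}\S_1(R)$. Conversely, if $\S_2(R)=\tfrac{1}{m}\S_1(R)$, then Eq.~\eqref{s2r} forces $t\,u=1$ and $s\,v=1$; as these are positive integers we get $s=t=1$, and by the translations of the first step this means $\G_1(R^{-1})\subseteq\G_2$ and $\G_1(R)\subseteq\G_2$.

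I do not expect a genuine obstacle here: the whole argument is bookkeeping with Theorem~\ref{coloring}. The only point that needs a moment's care is the $R\leftrightarrow R^{-1}$ symmetry of the setup, i.e.\ the identity $s=[\G_1(R^{-1}):\G_2\cap\G_1(R^{-1})]$, which one obtains either from $R\,\G_1(R^{-1})=\G_1(R)$ as above or, even more cheaply, from the color-counting description of $s$ in Theorem~\ref{coloring}(ii)(a).
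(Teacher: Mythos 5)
Your proof is correct and follows essentially the same route as the paper: both arguments reduce the geometric hypotheses to the values of $s,t,u,v$ from Eq.~\eqref{stuv} and then read the conclusion off Eq.~\eqref{s2r} together with the divisibilities $u\mid s\mid m$ and $v\mid t\mid m$. The paper is terser (it dispatches the first claim in the paragraph preceding the lemma and the equivalence with ``$s=t=u=v=1$'' in one line), whereas you make explicit the identity $s=[\G_1(R^{-1}):\G_2\cap\G_1(R^{-1})]$ via $R\,\G_1(R^{-1})=\G_1(R)$, which the paper leaves implicit; this is a correct and worthwhile clarification but not a different method.
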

		\begin{proof}
			Both $\G_1(R)$ and $\G_1(R^{-1})$ are sublattices of $\G_2$ if and only if $s=t=u=v=1$.  Applying Eq.~\eqref{s2r} completes the proof.
		\end{proof}

		The possibilities are quite limited when the sublattice $\G_2$ is of prime index in $\G_1$, as can be seen in the next proposition.
		\begin{prop}\label{primeindex}
		 	Suppose $\G_2$ is a sublattice of $\G_1$ of prime index $p$ and $R\in \OC(\G_1)$.
			\begin{enumerate}
				\item If both $\G_1(R)$ and $\G_1(R^{-1})$ are sublattices of $\G_2$ then $\S_2(R)=(1/p)\S_1(R)$.

				\item If exactly one of $\G_1(R)$ and $\G_1(R^{-1})$ is a sublattice of $\G_2$ then $\S_2(R)=\S_1(R)$.

				\item If neither $\G_1(R)$ nor $\G_1(R^{-1})$ is a sublattice of $\G_2$, then $\S_2(R)=\S_1(R)$ whenever $R$ is a color coincidence of
				the coloring of $\G_1$ induced by $\G_2$, and $\S_2(R)=p\,\S_1(R)$ otherwise.
			\end{enumerate}
		\end{prop}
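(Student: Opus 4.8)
The plan is to read everything off from the relations collected in Theorem~\ref{coloring}, specialized to $m=p$ prime, together with Corollary~\ref{divandeq2}. The key preliminary observations are: (a) since $s\mid m$ and $t\mid m$ with $m=p$, each of $s$ and $t$ equals either $1$ or $p$; (b) because $t=[\G_1(R):\G_2\cap\G_1(R)]$, one has $\G_1(R)\subseteq\G_2$ if and only if $t=1$, and applying this to $R^{-1}$ (using that the parameter $s$ for $R$ equals $[\G_1(R^{-1}):\G_2\cap\G_1(R^{-1})]$, i.e.\ the analogue of $t$ for $R^{-1}$) gives $\G_1(R^{-1})\subseteq\G_2$ if and only if $s=1$; and (c) $u\mid s$ and $v\mid t$ by Theorem~\ref{coloring}. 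Thus the three cases of the proposition correspond exactly to $(s,t)=(1,1)$, to $(s,t)\in\{(1,p),(p,1)\}$, and to $(s,t)=(p,p)$.

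For (i), $s=t=1$ forces $u=v=1$ since $u\mid s$ and $v\mid t$, so Eq.~\eqref{s2r} gives $\S_2(R)=t\,u\,\S_1(R)/m=(1/p)\,\S_1(R)$; this also follows at once from Lemma~\ref{spcasecor1}. For (ii) suppose, say, $t=1$ and $s=p$ (the subcase $s=1$, $t=p$ is symmetric under exchanging $R$ and $R^{-1}$, or one argues with the other expression in Eq.~\eqref{s2r}). Then $v\mid t=1$ forces $v=1$, and the second expression in Eq.~\eqref{s2r} yields $\S_2(R)=s\,v\,\S_1(R)/m=(p/p)\,\S_1(R)=\S_1(R)$.

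For (iii) we have $s=t=p=m$, so Corollary~\ref{divandeq2} applies verbatim and gives the first assertion, namely that $R$ is a color coincidence of the coloring of $\G_1$ induced by $\G_2$ if and only if $\S_2(R)=\S_1(R)$. For the remaining case, note that $u\mid s=p$ forces $u\in\{1,p\}$. If $u=1$, then Eq.~\eqref{s2r} gives $\S_2(R)=t\,u\,\S_1(R)/m=\S_1(R)$, whence Corollary~\ref{divandeq2} would make $R$ a color coincidence; so if $R$ is not a color coincidence we must have $u=p$, and then $\S_2(R)=t\,u\,\S_1(R)/m=(p\cdot p/p)\,\S_1(R)=p\,\S_1(R)$.

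The whole argument is bookkeeping once the dictionary between the statements ``$\G_1(R^{\pm1})$ is a sublattice of $\G_2$'' and the values of $s$ and $t$ is in place. I do not expect a genuine obstacle; the only points that need care are selecting the correct one of the two expressions for $\S_2(R)$ in Eq.~\eqref{s2r} in case (ii) (the one whose numerator contains the parameter known to equal $1$), and, in case (iii), invoking Corollary~\ref{divandeq2} instead of re-proving the color-coincidence criterion. The $R\leftrightarrow R^{-1}$ symmetry used in case (ii) is immediate from $\G_1(R^{-1})=R^{-1}\G_1(R)$ together with $R\G_2\cap\G_1(R)=R\big(\G_2\cap\G_1(R^{-1})\big)$.
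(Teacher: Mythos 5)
Your proposal is correct and follows essentially the same route as the paper: everything is read off from the index bookkeeping of Theorem~\ref{coloring} and Eq.~\eqref{s2r} with $m=p$, the paper merely packaging cases (i)--(ii) as consequences of Lemma~\ref{spcasecor1} and using Theorem~\ref{colcoinequiv} (``color coincidence iff $u=v=1$'') directly where you invoke the equivalent Corollary~\ref{divandeq2}. Your explicit dictionary between ``$\G_1(R^{\pm 1})\subseteq\G_2$'' and ``$t=1$'', ``$s=1$'' is exactly the content the paper leaves implicit.
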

		\begin{proof}
			Statements (i) and (ii) are immediate from Lemma~\ref{spcasecor1}.  If neither $\G_1(R)$ nor $\G_1(R^{-1})$ is contained in $\G_2$, then
			$s,t>1$ (see Eq.~\eqref{stuv}).  We have $s=t=p$ by Theorem~\ref{coloring}.  Recall that $R$ is a color coincidence of the coloring of $\G_1$ if and only if
			$u=v=1$ by Theorem~\ref{colcoinequiv} and Eq.~\eqref{intandeqthm}.  Thus, $\S_2(R)=\S_1(R)$ whenever $R$ is a color coincidence of the coloring of $\G_1$ by
			Eq.~\eqref{s2r}.  Otherwise, $u=v=p$ by Theorem~\ref{coloring}, and it follows from Eq.~\eqref{s2r} that $\S_2(R)=p\,\S_1(R)$.
		\end{proof}

		We can say more if $\S_1(R)$ and $m$ are relatively prime.
		\begin{prop}\label{relprime}
		 	Let $\G_2$ be a sublattice of $\G_1$ with $[\G_1:\G_2]=m$, and $R\in \OC(\G_1)$.  If $\S_1(R)$ and $m$ are relatively prime, then all colors in the coloring 
		 	of $\G_1$ determined by $\G_2$ appear in both colorings of $\G_1(R)$ and $\G_1(R^{-1})$, and $\S_2(R)\mid\S_1(R)$.
		\end{prop}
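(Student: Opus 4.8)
The plan is to show that the hypothesis $\gcd(\S_1(R),m)=1$ forces $s=t=m$, after which $u\mid s$ and $v\mid t$ together with Eq.~\eqref{s2r} will give $\S_2(R)\mid\S_1(R)$ immediately, and $s=t=m$ is precisely the assertion that all $m$ colors appear in both induced colorings. So the whole proof reduces to the divisibility fact $m\mid s$ (equivalently $m\mid t$); combined with $s\mid m$ and $t\mid m$ from Theorem~\ref{coloring}(i), this yields $s=t=m$.

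First I would recall from Theorem~\ref{coloring}(i) that $t=[\G_1(R):\G_2\cap\G_1(R)]$ divides $m$, and that $\S_2(R)=\tfrac{t\,u\,\S_1(R)}{m}$ with $u\mid s\mid m$ and $v\mid t\mid m$. From the second divisibility relation~\eqref{div2}, $\S_2(R)\mid m\S_1(R)$; substituting the formula for $\S_2(R)$ gives $t\,u\,\S_1(R)/m \mid m\S_1(R)$, i.e. $t\,u\mid m^2$ — not yet enough. The real leverage must come from Fact~\ref{coinindsubknown}, $\S_1(R)\mid m\S_2(R)=t\,u\,\S_1(R)$, which says $m\mid t\,u$. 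Now I would use $u\mid s\mid m$ and the coprimality: write $\S_1(R)$ in terms of the indices. Actually the cleanest route is to observe that $s,t,u,v$ are all divisors of $m$, hence all their prime factors divide $m$; so from $m\mid t\,u$ and the symmetric relation $m\mid s\,v$ one still needs an extra input tying these to $\S_1(R)$. That input is: $t\mid\S_1(R)$ and $s\mid\S_1(R)$. Indeed $\G_2\cap\G_1(R)\supseteq\G_2(R)$ is a sublattice of $\G_1(R)$ of index $t$, but more to the point, by the same Lemma~\ref{SIT} argument applied to $\G_2$ and $\G_1(R)$ as sublattices of $\G_1$, the quotient $\G_1(R)/(\G_2\cap\G_1(R))$ embeds into both $\G_1/\G_2$ (giving $t\mid m$) and into $\G_1/\G_1(R)$ via the natural map, giving $t\mid\S_1(R)$. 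Since $t\mid m$ and $t\mid\S_1(R)$ and $\gcd(\S_1(R),m)=1$, we conclude $t=1$ — wait, that would be too strong; let me reconsider which quotient embeds where.

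The point that needs care, and which I expect to be the main obstacle, is getting the correct divisibility $t\mid\S_1(R)$ (not $t\mid m$ used the wrong way). Here is the fix: $\G_2\cap\G_1(R)$ contains $\G_2(R)$, and $[\G_1(R):\G_2(R)]=\S_2(R)$, so $t=[\G_1(R):\G_2\cap\G_1(R)]$ divides $\S_2(R)$, hence $t\mid\S_2(R)$; likewise $u=[\G_2\cap\G_1(R):\G_2(R)]$ divides $\S_2(R)$ and $tu=[\G_1(R):\G_2(R)]=\S_2(R)$ exactly. That gives nothing new. Instead, the correct statement is that $\G_1(R)+\G_2$ is a sublattice of $\G_1$ of index $m/t$ (by Lemma~\ref{SIT}(i) applied with $\G_2'=\G_1(R)$, since $[\G_1(R):\G_2\cap\G_1(R)]=[\G_1(R)+\G_2:\G_2]=t$, so $[\G_1:\G_1(R)+\G_2]=m/t$). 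Now $\G_1(R)\subseteq\G_1(R)+\G_2\subseteq\G_1$, so $[\G_1:\G_1(R)+\G_2]$ divides $[\G_1:\G_1(R)]=\S_1(R)$; that is, $(m/t)\mid\S_1(R)$. Since $m/t$ also divides $m$ and $\gcd(m,\S_1(R))=1$, we get $m/t=1$, i.e. $t=m$. The identical argument with $R\G_2$ and $\G_1(R)$ inside $R\G_1$ (using $[R\G_1:\G_1(R)]=\S_1(R)$) gives $s=m$. Then $s=t=m$ means all $m$ colors occur in both induced colorings by Theorem~\ref{coloring}(ii)(a); and from $u\mid s=m$, $v\mid t=m$ together with $m\mid tu=m\cdot u$ (automatic) we instead read off from Fact~\ref{coinindsubknown} that $\S_1(R)\mid m\,\S_2(R)$ and from Eq.~\eqref{s2r} that $\S_2(R)=u\,\S_1(R)$ with $u\mid m$, whence $\gcd(u,\S_1(R))=1$ forces nothing — but $\S_2(R)\mid m\S_1(R)$ is~\eqref{div2}, and $\S_2(R)=u\S_1(R)$ with $u\mid m$ already shows $\S_2(R)\mid m\S_1(R)$ trivially; for the desired $\S_2(R)\mid\S_1(R)$ I use Corollary~\ref{divandeq1} once I know $R$ is a color coincidence, OR, more directly, note $u=[\G_2\cap\G_1(R):\G_2(R)]$ and since now $t=m$ we have $\S_2(R)=u\S_1(R)$; combined with $\S_2(R)\mid m\S_1(R)$ this is not yet $\S_2(R)\mid\S_1(R)$, so I will instead show $u=1$: with $s=t=m$, $u\mid m$; and $\S_2(R)=u\,\S_1(R)$ must divide... here I invoke that $\gcd(\S_1(R),m)=1$ together with $\S_1(R)\mid m\S_2(R)=m\,u\,\S_1(R)$ gives no constraint, so the honest finish is to apply Theorem~\ref{colcoinequiv}: $R$ fixes color $c_0$ because $\gcd(\S_1(R),m)=1$ makes $\G_2\cap\G_1(R)=\G_2(R)$ (as $[\G_2\cap\G_1(R):\G_2(R)]=u$ divides both $m$ and... ) — the remaining gap, which is the genuine crux, is showing $u=1$; I would close it by the symmetric $R^{-1}$ argument showing $R^{-1}$ also has all colors and then identifying $u$ for $R$ with $v$ for $R^{-1}$, or simply by observing $u\mid\gcd(m,\S_1(R))=1$ via the sublattice chain $\G_2(R)\subseteq\G_2\cap\G_1(R)\subseteq\G_2$ giving $u\mid m$ and $\G_2(R)\subseteq\G_2\cap\G_1(R)\subseteq\G_1(R)$ with the quotient embedding into $\G_1(R)/\G_2(R)$... which has order $\S_2(R)$, not obviously coprime to anything. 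Given this, the cleanest writeup applies Corollary~\ref{divandeq1} after first proving $R$ is a color coincidence, and the color-coincidence property itself follows from $s=t=m$ forcing, via Eq.~\eqref{s2r} and Fact~\ref{coinindsubknown}, that $u=v=1$, since $\S_1(R)\mid m\,\S_2(R)=m\cdot\frac{m\,u\,\S_1(R)}{m}\cdot\frac1{?}$ — so the last arithmetic step, reconciling $u$ with the coprimality hypothesis, is the part I would write out with the most care, expecting it to hinge on $\S_2(R)/\S_1(R)=u$ being simultaneously a divisor of $m$ and, by~\eqref{div2} rearranged, forced to be $1$.
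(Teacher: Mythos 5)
Your derivation of $s=t=m$ is correct and is a genuinely different route from the paper's: you get $m/t=[\G_1:\G_1(R)+\G_2]$ dividing both $m$ and $[\G_1:\G_1(R)]=\S_1(R)$, hence $m/t=1$, whereas the paper reads off $m\mid t$ from the integrality of $(t/m)\,\S_1(R)=\S_2(R)/u$ together with $t\mid m$ from Theorem~\ref{coloring}. Either way the first assertion (all $m$ colors occur in both induced colorings) is established.

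The step you correctly isolate as the crux --- showing $u=1$ --- cannot be supplied, and your circling is a symptom of that rather than a defect of your technique. Once $s=t=m$, Eq.~\eqref{s2r} collapses to $\S_2(R)=u\,\S_1(R)=v\,\S_1(R)$, which is the divisibility $\S_1(R)\mid\S_2(R)$ in the \emph{opposite} direction; the factor $u=v$ is a divisor of $m$ with no reason to be $1$. Concretely, take $\G_1=\Z^2$, $\G_2=2\Z\times\Z$ (so $m=2$) and $R=\tfrac{1}{5}\left(\begin{smallmatrix}4&-3\\3&4\end{smallmatrix}\right)$: then $\S_1(R)=5$ is coprime to $m$, but a direct computation gives $\S_2(R)=10$, so $u=v=2$ and $\S_2(R)\nmid\S_1(R)$. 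Thus the divisibility claim in the statement fails as written; the paper's own justification (``$\S_2(R)\mid\S_1(R)$ follows from Eq.~\eqref{s2r}'') in fact only delivers $\S_1(R)\mid\S_2(R)$, and the statement is also in tension with Proposition~\ref{primeindex}(iii), which permits $\S_2(R)=p\,\S_1(R)$ with $p\nmid\S_1(R)$ when $R$ is not a color coincidence. Your attempts to force $u=1$ out of Fact~\ref{coinindsubknown}, \eqref{div2} and coprimality were therefore bound to fail: $u=1$ is equivalent (given $s=t=m$) to $R$ being a color coincidence by Theorem~\ref{colcoinequiv}, which is an extra hypothesis, not a consequence of $\gcd(\S_1(R),m)=1$. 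The honest conclusion of the computation you set up is $\S_2(R)=u\,\S_1(R)$ with $u=v\mid m$.
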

		\begin{proof}
			From \eqref{s2r}, $(t/m)\S_1(R)=(1/u)\S_2(R)\in\N$ because $u\mid\S_2(R)$ (see Figure~\ref{latticediag}).  Since $\S_1(R)$ is relatively prime 
			to $m$, then $m\mid t$. However, $t\mid m$ as well by Theorem~\ref{coloring} and so $t=m$.  Similar arguments yield $s=m$.  Finally, $\S_2(R)\mid\S_1(R)$
			follows from Eq.~\eqref{s2r}.
		\end{proof}
		
	\section{Examples}
		We now give examples involving the cubic and hypercubic lattices.  For this, we parametrize linear isometries in three and four dimensions by quaternions.  
		Before we proceed, we recall several properties of the quaternion algebra $\H(\R)$.  
		
		Let $\set{\e,\ii,\j,\k}$ be the standard basis of $\R^4$ where $\e={(1,0,0,0)}^T$, $\ii={(0,1,0,0)}^T$, $\j={(0,0,1,0)}^T$, and $\k={(0,0,0,1)}^T$.  The
		\emph{quaternion algebra} is the algebra $\H\vcentcolon=\H(\R)=\R\e+\R\ii+\R\j+\R\k\cong\R^4$, whose elements are called \emph{quaternions} and are
		written as either $q=q_0\e+q_1\ii+q_2\j+q_3\k$ or $q=(q_0,q_1,q_2,q_3)$.  The product of two quaternions $q=(q_0,q_1,q_2,q_3)$ and $p=(p_0,p_1,p_2,p_3)$ 
		is given by 
		\begin{multline*}
			q\,p=(q_0p_0-q_1p_1-q_2p_2-q_3p_3)\e+\\(q_0p_1+q_1p_0+q_2p_3-q_3p_2)\ii+(q_0p_2-q_1p_3+q_2p_0+q_3p_1)\j\\+(q_0p_3+q_1p_2-q_2p_1+q_3p_0)\k.
		\end{multline*}
	
		The \emph{conjugate} of a quaternion $q=(q_0,q_1,q_2,q_3)$ is $\overline{q}=(q_0,-q_1,-q_2,-q_3)$, and its \emph{norm} is 
		\[\nrq=q\,\overline{q}=q_0^2+q_1^2+q_2^2+q_3^2\in\R.\]  The \emph{real part} and \emph{imaginary part} of $q$ are defined as $\Re{q}=q_0$ and 
		$\Im{q}=q_1\ii+q_2\j+q_3\k$, respectively. In addition, we define $\Im{\H}\vcentcolon=\set{\Im{q}: q\in\H}$. 
		Every nonzero quaternion $q$ has a multiplicative inverse given by 
		$q^{-1}=\overline{q}/\nrq$, which makes $\H$ an associative division algebra.  

		A quaternion whose components are all integers is called a \emph{Lipschitz quaternion} or \emph{Lipschitz integer}.  The set of Lipschitz quaternions shall
		be denoted by 
		\[\L=\set{(q_0,q_1,q_2,q_3)\in\H:q_0,q_1,q_2,q_3\in\Z}.\]  
		A \emph{primitive quaternion} $q=(q_0,q_1,q_2,q_3)$ is a quaternion in $\L$ whose components are relatively prime, that is, $\gcd(q_0,q_1,q_2,q_3)=1$.  On
		the other hand, a \emph{Hurwitz quaternion} or \emph{Hurwitz integer} is a quaternion whose components are all integers or all half-integers.  The set $\J$
		of Hurwitz quaternions can be written as 
		\[\J=\L\cup[(\tfrac{1}{2},\tfrac{1}{2},\tfrac{1}{2},\tfrac{1}{2})+\L].\]
		For an extensive treatment on quaternions, we refer to \cite{KR91,CS03,H19,HW79}.
		
		It suffices to look at the primitive cubic lattice when studying the coincidences of the three-dimensional cubic lattices because of the following
		well-known result \cite{GBW74, B97}. 
		\begin{fact}\label{coincindcubic}
			Let $\G_P=\Z^3$, $\G_B=\G_P\cup[(\frac{1}{2},\frac{1}{2},\frac{1}{2})+\G_P]$, and $\G_F=\G_B^*$ denote the primitive cubic, body-centered cubic, and 		
			face-centered cubic lattice, respectively.  Then $\OC(\G_P)=\OC(\G_F)=\OC(\G_B)=\operatorname{O}(3,\Q)$.  Moreover, if $R\in \operatorname{O}(3,\Q)$, then
			$\S_{\G_P}(R)=\S_{\G_F}(R)=\S_{\G_B}(R)$.
		\end{fact}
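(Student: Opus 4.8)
The plan is to establish the three assertions in turn, leaning on the coloring machinery of Section~\ref{colcoin} together with two standard facts about dual lattices. Realize the lattices with $\G_P=\Z^3$ and note that $\G_F=\G_B^*=\set{x\in\Z^3:x_1+x_2+x_3\in 2\Z}$, so that $\G_F\subseteq\G_P\subseteq\G_B$ with both inclusions of index $2$; in particular the three lattices are pairwise commensurate, and Fact~\ref{ocsub} gives $\OC(\G_B)=\OC(\G_P)=\OC(\G_F)$. That this common group equals $\operatorname{O}(3,\Q)$ is the standard identification of \cite{B97}: if $R$ is rational then $NR$ and $NR^{-1}$ are integral for a suitable $N\in\N$, so $N\Z^3\subseteq\Z^3\cap R\Z^3$ is of finite index; conversely, if $\Z^3\cap R\Z^3$ is of finite index then $R^{-1}$ maps a finite-index sublattice of $\Z^3$ into $\Z^3$, forcing $R\in M_3(\Q)$. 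Next, since $\G_F=\G_B^*$, the equality $\S_{\G_F}(R)=\S_{\G_B}(R)$ is the invariance of the coincidence index under dualization: using $(R\Lambda)^*=R\Lambda^*$, $(\Lambda_1\cap\Lambda_2)^*=\Lambda_1^*+\Lambda_2^*$ and the second isomorphism theorem, $\S_{\G_F}(R)=[\G_B^*:\G_B^*\cap R\G_B^*]=[\G_B+R\G_B:\G_B]=[R\G_B:\G_B\cap R\G_B]=\S_{\G_B}(R)$ (see \cite{B97}). It therefore remains to prove $\S_{\G_P}(R)=\S_{\G_B}(R)$ for all $R\in\operatorname{O}(3,\Q)$.

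This last step is where the coloring picture enters. Set $\G_1=\G_B$, $\G_2=\G_P$ (so $m=2$), and color $\G_B$ by the $\G_P$-coset decomposition, calling the points of $\G_P$ ``black'' and those of $w+\G_P$ ``white'', where $w=(\tfrac12,\tfrac12,\tfrac12)$. The shelling observation of~\cite{GBW74}, in this language, is that $|x|^2\in\Z$ for black $x$ whereas $|x|^2\in\tfrac34+\Z$ for white $x$, while every $R\in\operatorname{O}(3,\Q)$ preserves $|\cdot|^2$. Hence, if $x\in\G_P\cap\G_1(R^{-1})$, then $Rx\in\G_1(R)$ has integer squared norm and is thus black; applying the same to $R^{-1}$ yields $R[\G_P\cap\G_1(R^{-1})]=\G_P\cap\G_1(R)$, so $R$ fixes the color $c_0$ and is a color coincidence by Theorem~\ref{colcoinequiv}.

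To invoke Corollary~\ref{divandeq2} and conclude $\S_{\G_P}(R)=\S_{\G_B}(R)$, I then need $s=t=m=2$, that is, the white color must occur in both $\G_1(R)$ and $\G_1(R^{-1})$ (black occurs automatically, via the origin). For $\G_1(R)$ this amounts to $(w+\G_P)\cap(Rw+R\G_P)\neq\varnothing$, equivalently $w-Rw\in\G_P+R\G_P=(\G_P\cap R\G_P)^*$. And indeed: for any $y\in\G_P\cap R\G_P$ one computes $\inn{w-Rw}{y}=\tfrac12\bigl(\sum_i y_i-\sum_i(R^{-1}y)_i\bigr)$, which lies in $\Z$ because $\sum_i z_i\equiv|z|^2\pmod 2$ for every $z\in\Z^3$ while $|R^{-1}y|^2=|y|^2$. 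The identical argument with $R$ replaced by $R^{-1}$ takes care of $\G_1(R^{-1})$. Together with the previous paragraph, Corollary~\ref{divandeq2} gives $\S_{\G_P}(R)=\S_{\G_B}(R)$, and hence $\S_{\G_P}(R)=\S_{\G_B}(R)=\S_{\G_F}(R)$.

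I expect the only non-formal point to be the second half of the shelling step. That $R$ respects the black/white partition is immediate from norm preservation; what takes an extra idea is that the white color is actually \emph{realized} among the coincidence points, and the clean route is the congruence $\sum_i z_i\equiv|z|^2\pmod 2$ on $\Z^3$, which, with $|R^{-1}y|=|y|$, forces $w-Rw$ into $(\G_P\cap R\G_P)^*=\G_P+R\G_P$. Everything else is bookkeeping with the indices $s,t,u,v$ of Figure~\ref{latticediag} and Corollaries~\ref{divandeq1} and~\ref{divandeq2}. (Alternatively, once $R$ is known to be a color coincidence, Fact~\ref{coinindsubknown} and~\eqref{div2} give $\S_{\G_B}(R)\in\set{\S_{\G_P}(R),\,2\,\S_{\G_P}(R)}$, after which one could rule out the larger value by recalling that every coincidence index of the cubic lattices is odd; but proving \emph{that} essentially needs the quaternion parametrization of the appendix, so the direct argument above is preferable.)
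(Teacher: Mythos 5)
Your proof is correct. Note, however, that the paper does not prove this statement at all: it is imported as a known result from \cite{GBW74,B97}, and the introduction only sketches the shelling idea informally (``black and white points lie on different shells''), while Example~\ref{p-bcc} actually runs the logic in the \emph{opposite} direction, using Fact~\ref{coincindcubic} together with the oddness of the cubic coincidence indices to conclude via Proposition~\ref{relprime} and Corollary~\ref{divandeq2} that every coincidence isometry is a color coincidence. What you have written is a genuine, self-contained proof that realizes the program the introduction only describes: the norm argument shows $R[\G_P\cap\G_1(R^{-1})]=\G_P\cap\G_1(R)$, so $R$ fixes $c_0$ and Theorem~\ref{colcoinequiv} makes it a color coincidence; and --- this is the step the informal shelling sketch glosses over and which is genuinely needed to exclude the alternative $\S_{\G_P}(R)=\tfrac12\S_{\G_B}(R)$ --- the congruence $\sum_i z_i\equiv|z|^2\pmod 2$ on $\Z^3$ forces $w-Rw\in\G_P+R\G_P=(\G_P\cap R\G_P)^*$, so the white color is realized in $\G_1(R)$ and $s=t=m=2$, whence Corollary~\ref{divandeq2} (or directly Eq.~\eqref{s2r} with $u=1$) gives $\S_{\G_P}(R)=\S_{\G_B}(R)$. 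The duality step for $\G_F=\G_B^*$ and the identification $\OC=\operatorname{O}(3,\Q)$ are the standard arguments from \cite{B97} and are handled correctly. You are also right that the alternative route via Fact~\ref{coinindsubknown}, \eqref{div2} and oddness of the indices would be circular without the quaternion parametrization, so your direct argument is the better one.
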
		
						
		We now embed the cubic lattices in the Hurwitz ring $\J$ of integer quaternions and we employ \emph{Cayley's parametrization} of $\SO(3)$ (\emph{cf.}~\cite{KR91}).  
		That is, for every $R\in \SOC(\G)=\SO(3,\Q)$, there exists a primitive quaternion $q$ (which is unique up to a sign) so that for all $x\in\Im\H$, 
		$R(x)=qxq^{-1}=qx\overline{q}/\nrq$.  In such a case, we denote $R$ by $R_q$. 
		In fact, the coincidence index of $R_q\in \SOC(\G)$ is given by $\S(R_q)={\nrq}/{2^{\ell}}$, where $\ell$ is the largest power of $2$ that 
		divides $\nrq$ (called the odd part of $\nrq$) \cite{GBW74,G84,B97}.  
		Also, $T\in \OC(\G)\setminus \SO(\G)$ can be written as $T_q=-R_q$, where $R_q\in \SOC(\G)$.  
								
		\begin{ex}\label{p-bcc}
		 	Let $\G_1$ be the body-centered cubic lattice $\G_1=\Im{\J}$ and $\G_2$ its maximal primitive cubic sublattice $\G_2=\Im{\L}$.  Here, $[\G_1:\G_2]=2$ and so 
		 	the coloring of $\G_1$ determined by $\G_2$ consists of two colors.  For each $R\in \OC(\G_1)=\OC(\G_2)$, one has $\S_2(R)=\S_1(R)$ (see
		 	Fact~\ref{coincindcubic}).
		 	
			Since all the coincidence indices of $\G_1$ are odd numbers, both colorings of $\G_1(R)$ and $\G_1(R^{-1})$ induced by $\G_2$ include two colors by
			Proposition \ref{relprime}.  It follows then from Corollary~\ref{divandeq2} that $H_{\G_1,\G_2}$ is the entire group $\OC(\G_1)$ with all coincidence
			isometries fixing both colors.\qed
		\end{ex}

		\begin{ex}\label{bcc-p}
			This time, take the primitive cubic lattice $\G_1=\Im{\L}$ to be the parent lattice and the body-centered cubic lattice $\G_2=2\,\Im{\J}$ to be the
			sublattice of index 4 in $\G_1$.  Write $\G_1=\cup_{j=0}^3(c_j+\G_2)$ where $c_0=0$.  

			Arguments analogous to Example~\ref{p-bcc} yield that all four colors in the coloring of $\G_1$ determined by $\G_2$ show up in the coloring of $\G_1(R)$
			and $\G_1(R^{-1})$, and $H_{\G_1,\G_2}=\OC(\G_1)$.  Indeed, the result that $H_{\G_1,\G_2}=\OC(\G_1)$ in Examples \ref{p-bcc} and \ref{bcc-p} also follows from
			a shelling argument \cite{GBW74}.  That is, since all the points of $\G_1$ that lie on the same shell about the origin are either from $\G_2$ or from $\G_1\setminus\G_2$, 
			then every coincidence isometry of $\G_1$ must be a color coincidence by Theorem~\ref{colcoinequiv}.
		
			We also identify the color permutation induced by each coincidence isometry of $\G_1$.  
			If a coincidence isometry $R\in \OC(\G_1)$ is parametrized by the primitive quaternion $q$, then 
			\begin{enumerate}
				\item $R$ fixes all the colors if and only if $\nrq\equiv 1\imod{4}$,

				\item $R$ fixes two of the colors (one of the fixed colors is $c_0$) if and only if ${\nrq\equiv 2\imod{4}}$, and
		
				\item $R$ fixes only color $c_0$ if and only if $\nrq\equiv 0\imod{4}$.
			\end{enumerate}
			This result is justified in the Appendix.  Furthermore, the set of color permutations generated by the color coincidences of the coloring of $\G_1$ forms a group that 
			is isomorphic to $S_3$. \qed
		\end{ex}
		
    We now look at an example involving lattices in $\R^4$.
		There are only two distinct types of hypercubic lattices in four dimensions, namely the primitive hypercubic lattice $\Z^4$ and the centered hypercubic
		lattice $D_4$ (\emph{cf.}~\cite{CS88,BBNWZ78}).  We identify $D_4$ and $\Z^4$ with the set of Hurwitz quaternions $\J$ and the set of Lipschitz quaternions $\L$, respectively.		
		Even though $\OC(\Z^4)=\OC(D_4)=\operatorname{O}(4,\Q)$, the coincidence indices of a coincidence isometry with respect to the two lattices are not
		necessarily equal~\cite{B97,Z06,BZ08}.  

		Elements of $\operatorname{O}(4)$ can be parametrized by pairs of quaternions. For any $R\in\SO(4)$, there exists a pair of quaternions $(q,p)$ such that 
		$R(x)=qx\overline{p}/\abs{q\,p}$.  In this case we write $R=R_{q,p}$.  Whenever $R_{q,p}\in\SO(4,\Q)$, then $q$ and $p$ can be chosen to be primitive quaternions in $\L$. 
		However, not every pair $(q,p)$ of primitive quaternions defines an $R\in\SO(4,\Q)$. In fact, a primitive pair $(q,p)$ defines an $R\in\SO(4,\Q)$ if and only if $(q,p)$ is
		\emph{admissible}, that is, if $\abs{q\,p}\in\N$. 
                
		The coincidence index of ${R_{q,p}\in \SOC(D_4)}$ is given by $\S_{D_4}(R_{q,p})=\lcm(\nrq/2^k,\nrp/2^{\ell})$, where $k$ and $\ell$ are the highest powers such 
		that $2^k$ and $2^{\ell}$ divide $\nrq$ and $\nrp$, respectively.  On the other hand,  
		\begin{equation}\label{coincindhypcub}
				\S_{\Z^4}(R_{q,p})=\lcm(\S_{D_4}(R_{q,p}),\den(R_{q,p})),
		\end{equation}
     where $\den(R_{q,p})$ is the denominator of $R_{q,p}$, that is, the least common denominator of all entries of $R_{q,p}$ in its canonical matrix representation~\cite{B97}.
		
     Every rotoreflection $T$ in $\operatorname{O}(4,\Q)$ can also be parametrized by an admissible pair of quaternions, namely via 
		$T(x)=T_{q,p}(x)=q\overline{x}\overline{p}/\abs{q\,p}$. In particular, $T_{1,1}(x)=\overline{x}$ and  
    $T_{q,p}=R_{q,p}\cdot T_{1,1}$.  Moreover, we have $\S_{D_4}(T_{q,p})=\S_{D_4}(R_{q,p})$ and $\S_{\Z_4}(T_{q,p})=\S_{\Z_4}(R_{q,p})$.		
		
		The following example examines the set of color coincidences of the coloring of $D_4$ determined by $\Z^4$ which forms a proper subgroup of $\OC(D_4)$.
		
		\begin{ex}\label{hypcubex}
		 	Take $\G_1$ to be the centered hypercubic lattice $\G_1=\J$ and $\G_2$ to be the primitive hypercubic lattice $\G_2=\L$ of index 2 in $\G_1$.  Let 
		 	$R=R_{q,p}\in \SOC(\G_1)$ be parametrized by the admissible pair $(q,p)$ of primitive quaternions.  

			Since $[\G_1:\G_2]=2$ and $\S_1(R)$ is always odd, $s=t=2$ by Proposition~\ref{relprime}.  It follows from Theorem~\ref{coloring} that 
			$\S_2(R)=u\,\S_1(R)$ with $u=1$ or $u=2$. In particular, $\S_2(R)=\S_1(R)$ if and only if $R\in H_{\G_1,\G_2}$ by Corollary~\ref{divandeq2}.  Hence,
			$H_{\G_1,\G_2}=\set{R\in \OC(\G_1):\S_2(R)=\S_1(R)}$.    
						
			The set $H_{\G_1,\G_2}$ in this instance still forms a group.  To see this, write \[H_{\G_1,\G_2}=\set{R\in \OC(\G_1):\S_2(R)\text{ is odd}}.\]  Now, if
			$R_1,R_2\in H_{\G_1,\G_2}$ then $\S_2(R_2R_1)\mid\S_2(R_2)\cdot\S_2(R_1)$ by~\cite[Lemma 1]{Z10}.  This implies that $\S_2(R_2R_1)$ must also be odd, and hence
			$R_2R_1\in H_{\G_1,\G_2}$.  This proves that the product of two color coincidences of the coloring of $\G_1$ is again in $H_{\G_1,\G_2}$, and hence, $H_{\G_1,\G_2}$ is a
			subgroup of $\OC(\G_1)$.

			We identify the conditions that the quaternion pair $(q,p)$ should satisfy so that $R\in H_{\G_1,\G_2}$ in the Appendix.  This yields the following result about the coincidence
			index of a coincidence isometry of $\Z^4$ (compare with Eq.~\eqref{coincindhypcub}).
			\begin{prop}\label{hypcubindicesnew}
				Let $R_{q,p}\in \SOC(\Z^4)$ where $(q,p)$ is an admissible pair of primitive quaternions.  Then 
				either 
				\begin{alignat*}{2}
					\S_{\Z^4}(R_{q,p})&=\S_{D_4}(R_{q,p})=\lcm\lt(\tfrac{\nrq}{2^k},\tfrac{\nrp}{2^{\ell}}\rt)\;\;\text{or}\\
					\S_{\Z^4}(R_{q,p})&=2{\S_{D_4}(R_{q,p})},
				\end{alignat*}
				where $k$ and $\ell$ are the highest powers such that $2^k$ and $2^{\ell}$ 
				divide $\nrq$ and $\nrp$, respectively.  In particular, $\S_{\Z^4}(R_{q,p})=\S_{D_4}(R_{q,p})$  holds if and only if one of the following conditions are
				satisfied:
				\begin{enumerate}
					\item $\nrq$ and $\nrp$ are odd,
					
					\item $\nrq\equiv\nrp\equiv 2\imod{4}$ with $\inn{q}{p}$ even,
	
					\item $\nrq\equiv\nrp\equiv 0\imod{4}$ with $4\mid\inn{q}{p}$.
				\end{enumerate}
				The same holds for $T_{q,p}\in \OC(\Z^4)\setminus \SOC(\Z^4)$.
			\end{prop}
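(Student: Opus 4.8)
The plan is to reduce everything to the body-centered case via the factorization $T_{q,p}=R_{q,p}\cdot T_{1,1}$ and then leverage Example~\ref{hypcubex}. We already know from Eq.~\eqref{coincindhypcub} that $\S_{\Z^4}(R_{q,p})=\lcm(\S_{D_4}(R_{q,p}),\den(R_{q,p}))$. Since $[\G_1:\G_2]=[\J:\L]=2$, the bound \eqref{bound} forces $\S_{\Z^4}(R_{q,p})$ to be either $\S_{D_4}(R_{q,p})$ or $2\,\S_{D_4}(R_{q,p})$; the first alternative holds exactly when $R_{q,p}\in H_{\G_1,\G_2}$ by Corollary~\ref{divandeq2} (note $s=t=2$ by Proposition~\ref{relprime}, as $\S_{D_4}$ is always odd). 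So the entire content of the proposition is the translation of the color-coincidence condition $R_{q,p}[\L\cap\G_1(R^{-1})]=\L\cap\G_1(R)$ (equivalently, by Theorem~\ref{colcoinequiv}, ``$R_{q,p}$ fixes color $c_0$'') into the three arithmetic conditions on $\nrq$, $\nrp$, and $\inn{q}{p}$. This is exactly the computation deferred to the Appendix, so in the main text the proof is: cite Example~\ref{hypcubex}, invoke Corollary~\ref{divandeq2}, and refer to the Appendix for the explicit congruences.

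The core Appendix computation I would carry out as follows. First observe that $\G_1(R)=\J\cap R\J$ and $\G_1(R^{-1})=\J\cap R^{-1}\J$, while $\G_2\cap\G_1(R)=\L\cap R\J$ and $\L\cap\G_1(R^{-1})=\L\cap R^{-1}\J$. By Theorem~\ref{colcoinequiv} it suffices to test whether $R_{q,p}[\L\cap R^{-1}\J]=\L\cap R\J$, equivalently whether $R_{q,p}\L\cap\J = \L\cap\J = \L$ intersected appropriately—more precisely, using the identity in Eq.~\eqref{intandeqthm}, whether $R_{q,p}\L\cap\G_1(R)=\G_2(R)$. Since $R_{q,p}(x)=qx\overline p/\abs{qp}$ with $(q,p)$ admissible primitive, the question of which half-integer quaternions land in $\L$ after applying $R_{q,p}$ is governed by the parities of $\nrq$ and $\nrp$ and the residue of $\inn{q}{p}=\Re(q\overline p)$. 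The key technical fact is that $\tfrac12(1,1,1,1)\in\J\setminus\L$ represents the nontrivial coset $c_0+\G_2$ vs. the other coset, and $R_{q,p}$ maps the nonzero coset into $\L$ iff a short congruence in $q$, $p$ holds; working out $q\cdot\tfrac12(\e+\ii+\j+\k)\cdot\overline p$ modulo $2$ (and modulo $4$ when $\nrq,\nrp$ are even) produces precisely cases (i)–(iii). The admissibility constraint $\abs{qp}\in\N$ is what couples the parities of $\nrq$ and $\nrp$ so that only the listed combinations $(\text{odd},\text{odd})$, $(2,2)\bmod 4$, $(0,0)\bmod 4$ can occur.

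For the rotoreflection case, I would use $T_{q,p}=R_{q,p}\cdot T_{1,1}$ where $T_{1,1}(x)=\overline x$ is quaternion conjugation. Conjugation maps $\L$ onto $\L$ and $\J$ onto $\J$ (it permutes components with sign changes, preserving the all-integer and all-half-integer conditions), hence $T_{1,1}\in P(\Z^4)\cap P(D_4)$ and it fixes color $c_0$. Therefore $T_{q,p}$ fixes color $c_0$ iff $R_{q,p}$ does, so $T_{q,p}\in H_{\G_1,\G_2}$ iff $R_{q,p}\in H_{\G_1,\G_2}$; combined with the already-stated equalities $\S_{D_4}(T_{q,p})=\S_{D_4}(R_{q,p})$ and $\S_{\Z^4}(T_{q,p})=\S_{\Z^4}(R_{q,p})$, the ``same holds'' clause follows immediately.

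The main obstacle is the explicit modular bookkeeping for $R_{q,p}$ acting on the coset representative $\tfrac12(\e+\ii+\j+\k)$: one must carefully track when the resulting quaternion has all-integer versus all-half-integer components, splitting into the parity cases for $\nrq$ and $\nrp$ and, in the even cases, pushing to a congruence modulo $4$ where $\inn{q}{p}$ enters. Admissibility must be used at the right moment to exclude mixed-parity cases. Everything else is a direct appeal to Example~\ref{hypcubex}, Corollary~\ref{divandeq2}, and Eq.~\eqref{coincindhypcub}.
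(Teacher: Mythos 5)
Your overall framework agrees with the paper's: Eq.~\eqref{coincindhypcub} plus Proposition~\ref{relprime}, Theorem~\ref{coloring}, Corollary~\ref{divandeq2} and Theorem~\ref{colcoinequiv} reduce the proposition to deciding exactly when $R_{q,p}$ fixes color $c_0$, i.e.\ when $R_{q,p}\L\cap\J=\L\cap R_{q,p}\J$; and the rotoreflection clause does follow from $T_{1,1}(x)=\overline{x}$ preserving both $\L$ and $\J$, as you argue. The gap is in the computational core, which you only sketch, and the sketch as written would not go through, for two concrete reasons. First, admissibility does \emph{not} confine $(\nrq,\nrp)$ to the three listed parity combinations: for primitive $q\in\L$ the $2$-adic valuation of $\nrq$ lies in $\set{0,1,2}$, and the mixed case $\nrq$ odd, $\nrp\equiv 0\imod{4}$ is admissible (e.g.\ $q=\e$, $p=(1,1,1,1)$, $\abs{q\,p}=2$). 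This is the paper's Case~II, where one must actually \emph{prove} that $\inn{q}{p}$ is odd, exhibit $x=\tfrac12\abs{q\,\overline{p}}\in\L$ with $R(x)=\tfrac12 q\,\overline{p}\in\J\setminus\L$, and conclude $R\notin H_{\G_1,\G_2}$, hence $\S_{\Z^4}=2\,\S_{D_4}$ there. Your proposal silently discards this case by asserting it cannot occur.

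Second, your proposed test --- apply $R_{q,p}$ to the fixed representative $\tfrac12(\e+\ii+\j+\k)$ and read off integrality modulo $2$ or $4$ --- is not well posed. That point need not belong to $\G_1(R^{-1})=\J\cap R_{q,p}^{-1}\J$, so its image under $R_{q,p}$ is in general not a point of $\J$ at all, and ``all-integer versus all-half-integer'' is not a meaningful dichotomy for it; the color-coincidence condition is a statement about the sublattices $\L\cap R_{q,p}\J$ and $R_{q,p}\L\cap\J$, and any test elements must be chosen inside $\G_1(R^{\pm1})$, hence depend on $R$. The paper's computation instead takes a \emph{general} $v=q\,w\,\overline{p}/\abs{q\,\overline{p}}\in\L\cap R_{q,p}\J$ with $w\in\J$, splits $q=r_1+s_1$, $p=r_2+s_2$ with the $r_i$ in suitable two-sided ideals $(1+\ii)^r\J$ (Fact~\ref{f1}) and the $s_i$ fixed short quaternions determined by the parity pattern, and deduces $w\in\L$ in the positive cases (i)--(iii); equality then follows from $s=t=2$. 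In the negative cases it exhibits the scalar $\tfrac{1}{2^{j}}\abs{q\,\overline{p}}\in\L$, whose image $\tfrac{1}{2^{j}}q\,\overline{p}$ lies in $\J$ but has non-integral real part $\tfrac{1}{2^{j}}\inn{q}{p}$ precisely when the stated congruence on $\inn{q}{p}$ fails. Without this (or an equivalent) case-by-case argument, the three congruence conditions are asserted rather than established.
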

			Note that Proposition~\ref{hypcubindicesnew} is consistent and similar to the conditions set forth in \cite{Z06} on how to identify which of the 576 pure 
			symmetry rotations of $D_4$ are also pure symmetry rotations of $\Z^4$.
		\end{ex}	
		
	\section{Conclusion and Outlook}
		Let $\G_1$ be a lattice and $R\in\OC(\G_1)$.  
		A method to compute for the coincidence index of $R$ with respect to a sublattice $\G_2$ of $\G_1$
		was formulated in Theorem~\ref{coloring} via properties of the coloring of $\G_1$ determined by $\G_2$. 
		This led to the extension of color symmetries to color coincidences of sublattice colorings of $\G_1$.
		Theorem~\ref{colcoinequiv} shows that the color coincidences of a coloring of $\G_1$ induced by $\G_2$ are precisely
		those coincidence isometries of $\G_1$ that leave $\G_2$ invariant. 
		
		The authors suspect that the set of color coincidences of a sublattice coloring of $\G_1$ does not form a group.  
		A counterexample will not only confirm this suspicion, 
		but should also shed more light on the coincidence index of a product of two coincidence isometries.  
		The generalization of a color symmetry to color coincidences is not only interesting in its own right, 
		but it also provides a further connection between the relationship
		of the coincidence indices of a lattice and sublattice. 
		It would also be helpful if a comparable connection between similar isometries (\emph{cf.}~\cite{GB08}) and colorings of lattices can be set up. 
		This would give a unified perspective among similar sublattices (SSLs), CSLs, and colorings of lattices.	
			
	\section*{Acknowledgements}
		M.J.C.~Loquias would like to thank the German Academic Exchange Service (DAAD) for financial support during his stay in Bielefeld.  This work was supported by the 
		German Research Council (DFG), within the CRC 701.
		
	\appendix
	\section*{Appendix}
		
		Here, we give the promised justifications and proofs in Examples~\ref{bcc-p} and \ref{hypcubex}.
		
		\subsection*{Example 2}
			Recall that $\G_1=\cup_{j=0}^3(c_j+\G_2)$, where $\G_1=\Im{\L}$, $\G_2=2\,\Im{\J}$, and $c_0=0$.  Let $R=R_q\in \SOC(\G_1)$, where $q=(q_0,q_1,q_2,q_3)$ is a primitive
			quaternion.  For sure, $R$ is a color coincidence that fixes color $c_0$ because of 
			Theorem~\ref{colcoinequiv}.  We shall now determine how $R$ acts on the other colors $c_1$, $c_2$, $c_3$, and thus, the color permutation that $R$ generates.
			
			Since $R$ is a color coincidence, it is enough to consider a representative from each coset of $\G_2$ in $\G_1$ that is in $\G_1(R^{-1})$, and afterwards 
			identify to which coset of $\G_2$ the representative is sent by $R$.  Take $c_1=\S_1(R)\ii$, $c_2=\S_1(R)\j$, and $c_3=\S_1(R)\k$.  Indeed, for 
			$j\in\set{1,2,3}$, $c_j\notin\G_2$ since $\S_1(R)$ is odd, and $c_j\in\G_1(R^{-1})$ because 
			$\S_1(R)=\den(R)=\gcd\set{k\in\N:k\,R\text{ is an integral matrix}}$ (see~\cite{B97}).  One obtains
			\begin{alignat*}{2}
				R(c_1)&=\tfrac{\S_1(R)}{\nrq}\,q\ii\overline{q},\\
				R(c_2)&=\tfrac{\S_1(R)}{\nrq}\,q\j\overline{q},\\
				R(c_3)&=\tfrac{\S_1(R)}{\nrq}\,q\k\overline{q},
			\end{alignat*}
			where $\S_1(R)/\nrq=1/2^{\ell}$ with $\ell\in\set{0,1,2}$.  This gives rise to three different cases.

			Before we proceed, we take note of the following facts that will be used in the computations thereafter.
			
			\begin{fact}\label{f2}
				For every $q\in\H$ and $x\in\Im{\H}$, we have $qx\overline{q}\in\Im{\H}$.
			\end{fact}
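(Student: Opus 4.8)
The plan is to use the standard characterization of purely imaginary quaternions: a quaternion $z\in\H$ lies in $\Im{\H}$ if and only if $\overline{z}=-z$, which is immediate from the definition $\overline{z}=(z_0,-z_1,-z_2,-z_3)$ since $z\in\Im{\H}$ exactly when $z_0=\Re{z}=0$. So it suffices to verify that $\overline{qx\overline{q}}=-qx\overline{q}$ for arbitrary $q\in\H$ and $x\in\Im{\H}$.

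The key algebraic input is that quaternionic conjugation is an anti-involution, i.e.\ $\overline{ab}=\overline{b}\,\overline{a}$ for all $a,b\in\H$ and $\overline{\overline{a}}=a$; both facts follow directly from the multiplication formula recalled above (or can simply be cited as a basic property of $\H$). Applying this to $a=q$ and $b=x\overline{q}$, and then again inside, I would compute
\[
\overline{qx\overline{q}}=\overline{(qx)\,\overline{q}}=\overline{\overline{q}}\,\cdot\overline{qx}=q\cdot\overline{x}\cdot\overline{q}.
\]
Now I invoke the hypothesis $x\in\Im{\H}$, which gives $\overline{x}=-x$, so that $\overline{qx\overline{q}}=q(-x)\overline{q}=-\,qx\overline{q}$, and hence $qx\overline{q}\in\Im{\H}$ by the characterization above.

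There is essentially no obstacle here: the statement is a one-line consequence of conjugation being an anti-automorphism of $\H$. The only point requiring a modicum of care is keeping the order of factors correct when distributing the conjugation over the triple product $q\cdot x\cdot\overline{q}$, since $\H$ is noncommutative; doing it in two steps as above makes this transparent. (Geometrically this just records the familiar fact that the map $x\mapsto qx\overline{q}/\nrq$ used in Cayley's parametrization of $\SO(3)$ preserves the three-dimensional space $\Im{\H}$, which is why it defines a rotation there in the first place.)
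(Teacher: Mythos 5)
Your proof is correct. The paper states this as a known \emph{Fact} and gives no proof at all (unlike Fact~3, which is proved), so there is nothing to compare against; your argument via the characterization $z\in\Im{\H}\iff\overline{z}=-z$ together with the anti-involution property $\overline{ab}=\overline{b}\,\overline{a}$ is exactly the standard one the authors are implicitly relying on, and every step checks out.
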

			
			If $q\in\J$ and $j$ is the highest power of 2 such that $2^j$ divides $\nrq$, then $q={(1+\ii)}^jp$ for some $p\in\J$ of odd norm~\cite[page 37]{H19}.
			The next statement follows from this result.
					
			\begin{fact}\label{f1}
				The set ${(1+\ii)}^r\J=\{q\in\J:2^r\mid\nrq\}$ is a two-sided ideal of $\J$ for all $r\in\N$.
			\end{fact}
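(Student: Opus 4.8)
The plan is to prove the set equality $(1+\ii)^r\J=\set{q\in\J:2^r\mid\abss{q}}$ first, and then deduce the two‑sided ideal property by reading off each closure axiom from whichever of the two descriptions makes it transparent. For the inclusion $(1+\ii)^r\J\subseteq\set{q\in\J:2^r\mid\abss{q}}$ I would simply use multiplicativity of the quaternion norm: if $q=(1+\ii)^r p$ with $p\in\J$, then $\abss{q}=\abss{(1+\ii)^r}\,\abss{p}=2^r\abss{p}$, so $2^r\mid\abss{q}$. For the reverse inclusion, suppose $2^r\mid\abss{q}$ and let $j$ be the exact power of $2$ dividing $\abss{q}$, so that $j\geq r$; the result quoted immediately before the statement gives $q=(1+\ii)^j p'$ for some $p'\in\J$ of odd norm, and hence $q=(1+\ii)^r\bigl[(1+\ii)^{j-r}p'\bigr]\in(1+\ii)^r\J$ because $(1+\ii)^{j-r}p'\in\J$.

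With this equality in hand I would check the ideal axioms. Closure under addition (and negation) is immediate from the description $(1+\ii)^r\J$: for $a,b\in\J$ one has $(1+\ii)^r a\pm(1+\ii)^r b=(1+\ii)^r(a\pm b)$ with $a\pm b\in\J$, so $(1+\ii)^r\J$ is an additive subgroup of $\J$. Closure under right multiplication by $\J$ is equally immediate from the same description, since $\bigl((1+\ii)^r a\bigr)b=(1+\ii)^r(ab)\in(1+\ii)^r\J$. For closure under left multiplication I would switch to the norm description: if $2^r\mid\abss{q}$ and $s\in\J$, then $sq\in\J$ and $\abss{sq}=\abss{s}\,\abss{q}$ with $\abss{s}\in\Z_{\geq 0}$, whence $2^r\mid\abss{q}\mid\abss{sq}$ and so $sq\in(1+\ii)^r\J$. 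Since $(1+\ii)^r\J$ is therefore an additive subgroup that absorbs multiplication by $\J$ on both sides, it is a two‑sided ideal of $\J$; the case $r=0$ (if $0\in\N$) is the trivial ideal $\J$ itself.

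The only point that requires care is that $1+\ii$ is not central in $\J$, so one cannot conclude that $(1+\ii)^r\J$ is a \emph{left} ideal directly from its presentation as a set of left multiples; this is precisely the step where the factorization $q=(1+\ii)^j p'$ with $p'$ of odd norm (equivalently, the characterization of $(1+\ii)^r\J$ through the norm) does the real work. Everything else amounts to routine bookkeeping using that $\abss{\,\cdot\,}$ is multiplicative on $\H$ and integer‑valued on $\J$.
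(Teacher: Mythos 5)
Your proof is correct and takes essentially the same route the paper intends: the paper simply asserts that the Fact ``follows from'' the cited factorization $q=(1+\ii)^j p$ with $p$ of odd norm, and your argument is the natural elaboration of that claim, using the factorization for the reverse inclusion and norm multiplicativity for the left-ideal property. You rightly flag that the only non-routine point is the left-ideal closure (since $1+\ii$ is not central), which is exactly where the norm characterization is needed.
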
			

			\begin{fact}\label{f3}
				Let $\G_2=2\,\Im{\J}=2\,\Im{\L}\cup [(0,1,1,1)+2\,\Im{\L}]$.  Then the following hold:  
				\begin{enumerate}
					\item $2\J\cap\Im{\H}=2\,\Im{\L}\subseteq\G_2$
					
					\item If $q\in\J$ then $q-\overline{q}\in\G_2$.
				\end{enumerate}  
			\end{fact}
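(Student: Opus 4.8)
The plan is to reduce both statements to two elementary facts about quaternions: a quaternion lies in $\Im{\H}$ precisely when its real part vanishes, and $q-\overline{q}=2\,\Im{q}$ for every $q\in\H$. Before treating (i) and (ii) I would record the decomposition of $\G_2$ itself: starting from $\J=\L\cup[(\tfrac12,\tfrac12,\tfrac12,\tfrac12)+\L]$, passing to imaginary parts gives $\Im{\J}=\Im{\L}\cup[(0,\tfrac12,\tfrac12,\tfrac12)+\Im{\L}]$, and multiplying through by $2$ yields $2\,\Im{\J}=2\,\Im{\L}\cup[(0,1,1,1)+2\,\Im{\L}]$, which is the stated description of $\G_2$.

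For (i) I would prove the two inclusions separately. If $w\in 2\J\cap\Im{\H}$, write $w=2q$ with $q\in\J$; from $w\in\Im{\H}$ we get $\Re{w}=0$ and hence $\Re{q}=0$. A Hurwitz quaternion with vanishing real part cannot belong to the coset $(\tfrac12,\tfrac12,\tfrac12,\tfrac12)+\L$, all of whose elements have real part in $\tfrac12+\Z$; therefore $q\in\L$, so in fact $q\in\Im{\L}$ and $w=2q\in 2\,\Im{\L}$. For the reverse inclusion, $\Im{\L}\subseteq\J$ gives $2\,\Im{\L}\subseteq 2\J$, while $2\,\Im{\L}\subseteq\Im{\H}$ is immediate, so $2\,\Im{\L}\subseteq 2\J\cap\Im{\H}$. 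Finally $\Im{\L}\subseteq\Im{\J}$ gives $2\,\Im{\L}\subseteq 2\,\Im{\J}=\G_2$, which is the displayed containment.

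For (ii), given $q\in\J$ I would simply observe $q-\overline{q}=2\,\Im{q}$ and note that $\Im{q}\in\Im{\J}$ by the definition of $\Im{\J}$; hence $q-\overline{q}\in 2\,\Im{\J}=\G_2$.

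There is no genuine obstacle here; the whole fact is bookkeeping with conjugation and the coset structure of $\J$. The one point deserving a sentence of care is the structural observation underlying (i): under the identification $\Im{\H}\cong\R^3$ the Hurwitz quaternions with zero real part are exactly $\Im{\L}$, since the half-integer coset of $\J$ has real part bounded away from $0$. Everything else is direct.
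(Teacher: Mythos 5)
Your proof is correct and follows essentially the same route as the paper: (i) is elementary coset bookkeeping (the paper simply declares it clear), and (ii) rests on $q-\overline{q}=2\,\Im{q}$. The only cosmetic difference is that you conclude (ii) in one step from $\Im{q}\in\Im{\J}$, whereas the paper splits into the cases $q\in\L$ and $q\in\J\setminus\L$ to identify which coset of $2\,\Im{\L}$ the element lands in; both are valid.
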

			\begin{proof}
				Statement (i) is clear. If $q\in\L$ then $q-\overline{q}\in\G_2$ by (i).  On the other hand, if $q\in\J\setminus\L$ then 
				$q-\overline{q}\in(0,1,1,1)+2\,\Im{\L}\subseteq\G_2$.  This proves (ii).
			\end{proof}

			The three possible ratios of $\S_1(R)/\nrq$ are investigated below.

			\smallskip\noindent\textsc{Case I:} $\S_1(R)/\nrq=1$, that is, $\nrq\equiv 1\imod{4}$ and either one or three among the components of $q$ is/are odd.
			
				For instance, suppose $q_0$ is odd while $q_1$, $q_2$, $q_3$ are even, or $q_0$ is even while $q_1$, $q_2$, $q_3$ are odd.  In both instances, one can
				write $q=r+s$, where $r\in 2\J$ and $s=\e$.  One obtains
				\[R(c_j)=qx_j\overline{q}=rx_j\overline{r}+rx_j\overline{s}+sx_j\overline{r}+sx_j\overline{s}\]
				where $x_j=\ii,\j,\k$ if $j=1,2,3$, respectively.  Facts~\ref{f2}, \ref{f1}, and \ref{f3}(i) imply that $rx_j\overline{r}\in\G_2$, and
				$rx_j\overline{s}+sx_j\overline{r}=rx_j\overline{s}-\overline{rx_j\overline{s}}\in\G_2$ by Fact~\ref{f3}(ii).  Hence, $R(c_j)\in 
				sx_j\overline{s}+\G_2=x_j+\G_2=c_j+\G_2$ for $j\in\set{1,2,3}$.  Similarly, for the other three possibilities, $R(c_j)\in c_j+\G_2$ for $j\in\set{1,2,3}$. 
				Therefore, in all instances, $R$ fixes all colors.

			\noindent\textsc{Case II:} $\S_1(R)/\nrq=\frac{1}{2}$, that is, $\nrq\equiv 2\imod{4}$ and exactly two components of $q$ are odd.

				Consider the instance when both $q_0$ and $q_1$ are odd, or when both $q_2$ and $q_3$ are odd.  Either way, one can express $q$ as $q=r+s$ where $r\in2\J$ 
				and $s=(1,1,0,0)$.  One has in this case
				\[R(c_j)=\tfrac{1}{2}(rx_j\overline{r}+rx_j\overline{s}+sx_j\overline{r}+sx_j\overline{s})\]
				where $x_j=\ii,\j,\k$ if $j=1,2,3$, respectively.  Now, $\frac{1}{2}rx_j\overline{r}\in 2\J$ because 4 divides $\abss{\frac{1}{2}rx_j\overline{r}}$.  
				This, with Facts~\ref{f2} and \ref{f3}(i), implies that $\frac{1}{2}rx_j\overline{r}\in\G_2$.  Since ${\frac{1}{2}rx_j\overline{s}\in\J}$, one obtains
				that $\frac{1}{2}rx_j\overline{s}+\frac{1}{2}sx_j\overline{r}\in\G_2$ by Fact~\ref{f3}(ii).  Therefore, 
				\[R(c_j)\in\tfrac{1}{2}sx_j\overline{s}+\G_2=\lt\{
				\begin{aligned}
					&c_1+\G_2, &&\text{if }j=1\\
					&c_3+\G_2, &&\text{if }j=2\\
					&c_2+\G_2, &&\text{if }j=3.
				\end{aligned}\rt.\]
				Thus, $R$ induces the permutation $(c_2c_3)$ of colors.   Similar computations for the other two possibilities yield that if $q_0$ and $q_j$ are of the
				same parity, where $j\in\set{1,2,3}$, then $R$ fixes both colors $c_0$ and $c_{j}$ and swaps the other two colors.

			\noindent\textsc{Case III:} $\S_1(R)/\nrq=\frac{1}{4}$, that is, $\nrq\equiv 0\imod{4}$ and all components of $q$ are odd.

				Suppose an even number of components of $q$ are congruent to 1 modulo 4.  Then, one can write $q=r+s$, where $r\in (1,1,0,0)2\J$ and $s=(1,1,1,1)$.  Thus,
				\[R(c_j)=\tfrac{1}{4}(rx_j\overline{r}+rx_j\overline{s}+sx_j\overline{r}+sx_j\overline{s})\]
				where $x_j=\ii,\j,\k$ if $j=1,2,3$, respectively.  Since 4 divides ${\abs{\frac{1}{4}rx_j\overline{r}}}^2$, one has ${\frac{1}{4}rx_j\overline{r}\in 2\J}$ 
				and together with Facts~\ref{f2} and \ref{f3}(i), $\frac{1}{4}rx_j\overline{r}\in\G_2$.  Also, by Fact~\ref{f3}(ii) one concludes that
				$\frac{1}{4}rx_j\overline{s}+\frac{1}{4}sx_j\overline{r}\in\G_2$ because $\frac{1}{4}rx_j\overline{s}\in\J$.  Finally, 
				\[R(c_j)\in\tfrac{1}{4}sx_j\overline{s}+\G_2=\lt\{
				\begin{aligned}
					&c_2+\G_2, &&\text{if }j=1\\
					&c_3+\G_2, &&\text{if }j=2\\
					&c_1+\G_2, &&\text{if }j=3.
				\end{aligned}\rt.\]
				Hence, $R$ generates the permutation $(c_1c_2c_3)$ of colors.  On the other hand, if an odd number of components of $q$ are congruent to 1 modulo 4, then
				similar arguments show that $R$ induces the permutation $(c_1c_3c_2)$ of colors.
				
				Given a coincidence reflection $T_q\in \OC(\G_1)$, the color permutation that it effects is the same as that of the coincidence rotation $R_q$.	
	
			\subsection*{Example 3}
				Recall that $\G_1=\J$, $\G_2=\L$, and $R=R_{q,p}\in \SOC(\G_1)$ is parametrized by the admissible pair $(q,p)$ of primitive quaternions.  
				The following looks at the conditions that the quaternion pair $(q,p)$ should satisfy so that $R\in H_{\G_1,\G_2}$.

				Going through the different possible admissible quaternion pairs $(q,p)$ results in the following cases.  In each case, the sets $R\G_2\cap\G_1(R)$ and
				$\G_2\cap\G_1(R)$ are compared in order to ascertain whether $R\in H_{\G_1,\G_2}$ or not (see Theorem~\ref{colcoinequiv}).

				\smallskip\noindent\textsc{Case I:} $\nrq$ and $\nrp$ are odd

					Suppose $v\in\G_2\cap\G_1(R)$ and write $v=qw\overline{p}/\abs{q\,\overline{p}}$ for some $w\in\J$.  This means that 
					$\abs{q\,\overline{p}}v=qw\overline{p}\in\L$.  Since $\nrq$ and $\nrp$ are odd, one can express $q=r_1+s_1$ and $p=r_2+s_2$, where $r_1,r_2\in 2\J$, 
					and $s_1,s_2\in\set{\e,\ii,\j,\k}$.  With this, one obtains
					\[qw\overline{p}=r_1w\overline{r_2}+r_1w\overline{s_2}+s_1w\overline{r_2}+s_1w\overline{s_2}\in\L.\]
					By Fact~\ref{f1}, $r_1w\overline{r_2}+r_1w\overline{s_2}+s_1w\overline{r_2}\in 2\J\subseteq\L$, which implies that $s_1w\overline{s}_2\in\L$.  Thus,
					$v=Rw\in R\G_2$ and $\G_2\cap\G_1(R)\subseteq R\G_2\cap\G_1(R)$.  It follows then that $R\G_2\cap\G_1(R)=\G_2\cap\G_1(R)$, and so $R\in H_{\G_1,\G_2}$. 
					
				\noindent\textsc{Case II:} $\nrq$ is odd and $\nrp\equiv 0\imod{4}$, or $\nrq\equiv 0\imod{4}$ and $\nrp$ is odd

					Consider $x=\frac{1}{2}\abs{q\,\overline{p}}\in\L$.  One has $R(x)=qx\overline{p}/\abs{q\,\overline{p}}=\frac{1}{2}q\,\overline{p}\in\J$.  Thus, 
					$R(x)\in R\G_2\cap\G_1(R)$.  However, the first component of $q\,\overline{p}$, which is equal to the inner product $\inn{q}{p}$ of $q$ and $p$
					(or the standard scalar product of $q$ and $p$ as vectors in $\R^4$), is odd. This implies that
					$\frac{1}{2}q\,\overline{p}\notin\L$ and $R(x)\notin\G_2\cap\G_1(R)$.  Hence, $R\G_2\cap\G_1(R)\neq \G_2\cap\G_1(R)$ and $R\notin H_{\G_1,\G_2}$.

				\noindent\textsc{Case III:} $\nrq\equiv\nrp\equiv 2\imod{4}$
					
					Write $q=r_1+s_1$ and $p=r_2+s_2$ where $r_1$, $r_2\in 2\J$ and \[s_1, s_2\in\set{(1,1,0,0),(1,0,1,0),(1,0,0,1)}.\]  Note that 
					$\inn{q}{p}$ is	even if and only if $s_1=s_2$.

					Consider again $x=\frac{1}{2}\abs{q\,\overline{p}}\in\L$ so that $R(x)=\frac{1}{2}q\,\overline{p}\in R\G_2\cap\G_1(R)$.  Now, 
					$\frac{1}{2}q\,\overline{p}\notin\L$ if and only if $\inn{q}{p}$ is odd.  Since $\frac{1}{2}q\,\overline{p}\notin\L$ implies that 
					$R\G_2\cap\G_1(R)\neq\G_2\cap\G_1(R)$, $R\notin H_{\G_1,\G_2}$ whenever $\inn{q}{p}$ is odd.

					It remains to check the case $s_1=s_2$.  Take $v\in\G_2\cap\G_1(R)$.  Write $v=qw\overline{p}/\abs{q\,\overline{p}}$ for some $w\in\J$.  One has
					\[\tfrac{1}{2}\abs{q\,\overline{p}}v=\tfrac{1}{2}(r_1w\overline{r_2}+r_1w\overline{s_1}+s_1w\overline{r_2}+s_1w\overline{s_1})\in\L.\]
					Now, $\frac{1}{2}r_1w\overline{r_2}\in 2\J\subseteq\L$ and $\tfrac{1}{2}r_1w\overline{s_1}$, $\tfrac{1}{2}s_1w\overline{r_2}\in 
					(1,1,0,0)\J\subseteq\L$ by Fact~\ref{f1}.  Hence, $\tfrac{1}{2}s_1w\overline{s_1}=s_1w{s_1}^{-1}\in\L$ 
					meaning $w\in{s_1}^{-1}\L s_1=\L$ for all three possible values of $s_1$.  It follows then that $v\in R\G_2$ and $R\G_2\cap\G_1(R)=\G_2\cap\G_1(R)$.  
					Thus, $R\in H_{\G_1,\G_2}$ if $\inn{q}{p}$ is even.
					
				\noindent\textsc{Case IV:} $\nrq\equiv\nrp\equiv 0\imod{4}$
	 
					Write $q=r_1+s_1$ and $p=r_2+s_2$ where $r_1,r_2\in (1,1,0,0)2\J$ and \[s_1,s_2\in\set{(1,1,1,1),(1,1,1,-1)}.\]  Note that $4\mid\inn{q}{p}$ 
					if and only if $s_1=s_2$. 
	 
					Take $x=\frac{1}{4}\abs{q\,\overline{p}}\in\L$.  One obtains $R(x)=\lt(\frac{1}{2}q\rt)\lt(\frac{1}{2}\overline{p}\rt)\in\J$. Hence, $Rx\in
					R\G_2\cap\G_1(R)$.  Observe however that $\frac{1}{4}q\,\overline{p}\notin\L$ if and only if $4\nmid\inn{q}{p}$.  Since 
					$\frac{1}{4}q\,\overline{p}\notin\L$ means that $R\G_2\cap\G_1(R)\neq\G_2\cap\G_1(R)$, $4\nmid\inn{q}{p}$ implies $R\notin H_{\G_1,\G_2}$.
	 
					Again, it remains to check the instance when $s_1=s_2$.  Let $v\in\G_2\cap\G_1(R)$.  If one writes $v=qw\overline{p}/\abs{q\,\overline{p}}$ for 
					some $w\in\J$, one gets 
					\[\tfrac{1}{4}\abs{q\,\overline{p}}v=\tfrac{1}{4}(r_1w\overline{r_2}+r_1w\overline{s_1}+s_1w\overline{r_2}+s_1w\overline{s_1})\in\L.\]
					By Fact~\ref{f1}, $\frac{1}{4}r_1w\overline{r_2}$, $\tfrac{1}{4}r_1w\overline{s_1}$, and 
					$\tfrac{1}{4}s_1w\overline{r_2}\in (1,1,0,0)\J\subseteq\L$.  Therefore, $w\in\L$ for both possible values of $s_1$.  Hence, $v\in R\G_2$ which implies 
					that $R\G_2\cap\G_1(R)=\G_2\cap\G_1(R)$.  Therefore, $R\in H_{\G_1,\G_2}$ whenever $\inn{q}{p}$ is divisible by 4.
					
				\smallskip The results above also hold for coincidence reflections $T_{q,p}=R_{q,p}\cdot T_{1,1}\in \OC(\G_1)$.		
	\bibliographystyle{amsplain}
	\bibliography{bibliog}
\end{document}